\newtheorem{thm}{Theorem}[section]
\newtheorem{lem}[thm]{Lemma}
\newtheorem{prop}[thm]{Proposition}
\newtheorem{ques}[thm]{Question}
\theoremstyle{definition}
\newtheorem{defn}[thm]{Definition}
\newtheorem{exe}[thm]{Example}
\newtheorem{rem}[thm]{Remark}
\newtheorem{back}[thm]{Background}
\newcommand{\Z}{\mathbf{Z}}
\newcommand{\R}{\mathbf{R}}
\newcommand{\Q}{\mathbf{Q}}
\newcommand{\G}{\mathbf{G}}
\newcommand{\SL}{\textnormal{SL}}
\newcommand{\Sp}{\textnormal{Sp}}
\newcommand{\cd}{\textnormal{cd}}
\newcommand{\BS}{\mathrm{BS}}
\newcommand{\ltb}{\beta_1}
\DeclareMathOperator{\DEF}{def}
\DeclareMathOperator{\Sol}{Sol}
\title[Presentability by products for some classes of groups]
{Presentability by products for some classes of groups}
\author{P.\ de la Harpe}
\address{Section de math\'ematiques,
Universit\'e de Gen\`eve,
C.P.~64,
1211 Gen\`eve~4, Switzerland}
\email{Pierre.delaHarpe@unige.ch}
\author{D.\ Kotschick}
\address{Mathematisches Institut, 
{\smaller LMU} M\"unchen,
Theresienstr.~39,
80333~M\"unchen, 
Germany}
\email{dieter@member.ams.org}
\thanks{Research by the second author done in part 
at the Institute for Advanced Study in Princeton 
with the support of The Fund For Math and The Oswald Veblen Fund.
We are grateful to M.~Bridson, M.~Bucher, Y.~de~Cornulier, G.~Levitt and C.~Weber
for useful information, and to L.~Paris for his permission to reproduce an unpublished result 
from his thesis in the appendix.}
\subjclass[2000]{20F65, 22E40, 57M05}
\date{25 Sept.~2014, revised 4 March 2015 and 11 April 2016; 
\copyright{ P.\ de la Harpe and D.\ Kotschick 2014-2016}}
\begin{document}

\begin{abstract}
In various classes of infinite groups, we identify groups that are presentable by products, 
i.e.\ groups having finite index subgroups 
which are quotients of products of two commuting infinite subgroups.
The classes we discuss here include groups of small virtual cohomological dimension and
irreducible Zariski dense subgroups of appropriate algebraic groups. 
This leads to  applications to groups of positive deficiency, 
to fundamental groups of three-manifolds and to Coxeter groups.

For finitely generated groups presentable by products 
we discuss the problem of whether the factors in a 
presentation by products may be chosen to be finitely generated.
\end{abstract}

\maketitle

\section{Motivation}
\label{sectionMotivation}

For two connected closed oriented manifolds $M$, $N$ of the same dimension, 
a natural question is to ask whether $M$ \textbf{dominates} $N$,
i.e.\ whether there exists a continuous map $M \longrightarrow N$ of non-zero degree.
The interest in the ensuing transitive relation between homotopy types of manifolds
goes back at least to the late 1970's \cite{MiT--77, Gro--82, CaT--89}.

There are several known necessary conditions for $M$ to dominate $N$, 
some elementary, and some not.
On the elementary side, if there exists an $f \colon M \longrightarrow N$ of non-zero degree, 
then $f^*$ injects the rational cohomology ring of $N$ into that of $M$, 
in particular the Betti numbers of $M$ are at least as large as the Betti numbers of $N$, 
and $f_*$ surjects $\pi_1(M)$ onto a finite index subgroup of $\pi_1(N)$; see~\cite{Hop--30}, 
Satz~I\footnote{Hopf~\cite{Hop--30} did not use cohomology, but formulated the result using the 
Umkehr map on intersection rings.} resp.~\S~6. 
Non-elementary obstructions to domination often arise from geometric considerations, 
for example through the use of harmonic maps~\cite{CaT--89}, 
or the application of asymptotic invariants of manifolds, 
such as the simplicial volume \cite{Gro--82} or the minimal volume entropy \cite{BCG--95}.

The subject of the present paper is motivated by a particular case of the domination question. 
See \cite[Page 304]{Gro--99},
where Gromov discusses the issue of representing even-degree homology classes
by products of surfaces, and \cite{KoL--09}.

\begin{ques}
\label{particularques}
For a given connected closed oriented $n$-manifold $N$, 
does there exist a non-trivial direct product 
$M = M_1 \times M_2$ of dimension $n$ which dominates $N$~?
\end{ques}

The answer is easy in dimension two: 
$\Sigma_g$ dominates $\Sigma_h$ if and only if $g \ge h$,
where $\Sigma_g$ denotes the closed orientable connected surface of genus $g$;
in particular $\Sigma_h$ is dominated by a non-trivial product 
(i.e. by the two-torus) if and only if $h=0$ or $h=1$.
An answer is also known in dimension three; see \cite{KoN--13}.
Simple general answers cannot be expected in higher dimensions, 
but partial results are contained in \cite{KoL--09,KoL--13,Neo--14}.

Before formulating another answer, it is convenient 
to introduce two definitions.
The first one originates in \cite{Gro--82, Gro--83}.

\begin{defn}
\label{defratess}
Let $N$ be a closed oriented connected manifold of dimension $n$.
Denote by $[N] \in H_n(N, \Q)$ its fundamental class,
by $\Gamma$ its fundamental group,
by $c \colon N \longrightarrow \operatorname{B}\Gamma$ 
the classifying map of the universal covering,
and by $c_n$ the induced map on the rational homology groups $H_n(-, \Q)$.
The manifold $N$ is said to be \textbf{rationally essential}
if $c_n([N]) \ne 0$ in $H_n( \operatorname{B}\Gamma, \Q)$.
\end{defn}

The obvious examples of rationally essential manifolds are the aspherical ones,
i.e.\ manifolds  with contractible universal covering.
Other examples include manifolds with non-zero simplicial volume
\cite[Corollary B, Section 3.1]{Gro--82}, 
manifolds satisfying suitable enlargeability conditions in the sense of Gromov--Lawson
\cite{HKRS--08},
and manifolds that have non-zero degree maps onto rationally essential manifolds,
in particular connected sums with a rationally essential summand \cite[Page 3]{Gro--83}.

The next definition is from \cite{KoL--09}:

\begin{defn}
\label{defppgps}
Let $\Gamma$ be an infinite group.
A \textbf{presentation  of $\Gamma$ by a product} is a homomorphism
$\varphi \colon \Gamma_1 \times \Gamma_2 \longrightarrow \Gamma$,
where $\Gamma_1, \Gamma_2$ are two groups,
such that $\varphi(\Gamma_1 \times \{1\}), \varphi(\{1\} \times \Gamma_2)$ are infinite
and $\varphi(\Gamma_1 \times \Gamma_2)$ is of finite index in $\Gamma$.
The group $\Gamma$ is \textbf{presentable by a product}
if there exists such a homomorphism.
\end{defn}

Clearly, an infinite group $\Gamma$ is presentable by a product
if and only if there exist two commuting infinite subgroups of $\Gamma$
whose union generates a subgroup of finite index.
Note that the images of $\Gamma_1$ and $\Gamma_2$
need not have trivial intersection, indeed need not be distinct;
for example, $\varphi \colon \Z \times \Z \longrightarrow \Z$, 
$(m,n) \longmapsto m+n$, shows that an infinite cyclic group is presentable by a product.

\begin{exe}
\label{basicexplspp}
We collect here some basic examples for this notion.
Recall that a group $\Gamma$ is \textbf{virtually} another group $\Delta$
if it has a finite index subgroup isomorphic to $\Delta$.
\begin{enumerate}[noitemsep,label=(\arabic*)]
\item\label{1DEbasicexplspp}
A group with an infinite centre is presentable by a product.
\item\label{2DEbasicexplspp}
Let $\Gamma$ be an infinite group in which every subgroup of finite index has trivial centre.
Then $\Gamma$ is presentable by a product if and only if $\Gamma$ is virtually a direct 
product of two infinite groups \cite[Proposition 3.2]{KoL--09}.
\item\label{3DEbasicexplspp}
Presentability by products is invariant under passage to finite index subgroups
\cite[Lemma 3.4]{KoL--09}. 
\item\label{4DEbasicexplspp}
A free product $\Gamma_1 \ast \Gamma_2$ of two non-trivial groups
is presentable by a product if and only if both factors are of order two;
see \cite[Corollary 9.2]{KoL--13}.
\item\label{5DEbasicexplspp}
An infinite Gromov hyperbolic group is not presentable by a product,
unless it is elementary, i.e.\ unless it has an infinite cyclic subgroup of finite index;
see \cite[Theorem 1.5]{KoL--09}.
\item\label{6DEbasicexplspp}
Infinite simple groups are not presentable by products.
\end{enumerate}
The references given for 
\ref{2DEbasicexplspp}, \ref{3DEbasicexplspp} and \ref{4DEbasicexplspp}
deal with countable groups only, but the arguments carry over to the general case.
\end{exe}

The following result, Theorem 1.4 in \cite{KoL--09}, is an attempt to 
answer Question \ref{particularques} for rationally essential manifolds. 

\begin{thm}
\label{translation}
Let $N$ be a rationally essential closed oriented manifold. If $N$ is dominated by a 
non-trivial product $M_1\times M_2$, then its fundamental group $\pi_1(N)$ is presentable by a product.
\end{thm}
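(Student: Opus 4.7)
The plan is to take $\Gamma := \pi_1(N)$ and $\Gamma_i := \pi_1(M_i)$ and to verify directly that $\varphi := f_\ast \colon \Gamma_1 \times \Gamma_2 \to \Gamma$ is a presentation by a product, where $f \colon M_1 \times M_2 \to N$ has non-zero degree $d$. The elementary Hopf-type fact recalled in the introduction already gives that $\varphi(\Gamma_1 \times \Gamma_2)$ has finite index in $\Gamma$, so the substance of the proof is to check that $A := \varphi(\Gamma_1 \times \{1\})$ and $B := \varphi(\{1\} \times \Gamma_2)$ are both infinite. Suppose for contradiction that $A$ is finite. After replacing $N$ by the finite cover $N'$ corresponding to $\Lambda := \varphi(\Gamma_1 \times \Gamma_2) \le \Gamma$ and $f$ by its lift $f' \colon M_1 \times M_2 \to N'$ (of non-zero degree $d'$), one may assume $\varphi$ is surjective onto $\Lambda$; $N'$ remains rationally essential since the covering has non-zero degree. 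Because $A$ commutes with $B$ and $\Lambda = AB$, $A$ is a finite normal subgroup of $\Lambda$, and the quotient $D := \Lambda/A$ is itself a quotient of $\Gamma_2$.

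The core of the argument is to evaluate $\pi_\ast c'_\ast[N']$ in two ways, where $c' \colon N' \to B\Lambda$ classifies the universal cover of $N'$ and $\pi \colon B\Lambda \to BD$ is induced by the quotient homomorphism. On the one hand, rational essentiality gives $c'_\ast[N'] \ne 0$ in $H_n(B\Lambda, \Q)$; since $A$ is finite, the Serre spectral sequence of $BA \to B\Lambda \to BD$ collapses rationally (using $H_\ast(BA, \Q) = \Q$ concentrated in degree $0$), so $\pi_\ast$ is an isomorphism on rational homology and $\pi_\ast c'_\ast[N'] \ne 0$. On the other hand, $c' \circ f'$ is homotopic to $B\varphi \circ (c_1 \times c_2)$, where $c_i \colon M_i \to B\Gamma_i$ are classifying maps, since $B\Lambda$ is a $K(\Lambda,1)$ and both maps induce $\varphi$ on $\pi_1$. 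Because the composed homomorphism $\Gamma_1 \times \Gamma_2 \to \Lambda \to D$ is trivial on the first factor, $\pi \circ B\varphi$ factors up to homotopy through the projection $B\Gamma_1 \times B\Gamma_2 \to B\Gamma_2$, and hence $\pi \circ c' \circ f'$ factors through the projection $\mathrm{pr} \colon M_1 \times M_2 \to M_2$. Non-triviality of the product forces $\dim M_2 < n$, so $\mathrm{pr}_\ast[M_1 \times M_2] = 0$ in $H_n(M_2, \Q)$ and therefore $\pi_\ast c'_\ast f'_\ast[M_1 \times M_2] = 0$. But $f'_\ast[M_1 \times M_2] = d'[N']$ with $d' \ne 0$, so this forces $\pi_\ast c'_\ast[N'] = 0$, contradicting the above. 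Swapping the roles of $A$ and $B$ rules out $B$ being finite, completing the proof.

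The main obstacle I anticipate is the identification of $c' \circ f'$ with $B\varphi \circ (c_1 \times c_2)$ up to homotopy. This is a consequence of the universal property of Eilenberg--MacLane spaces: maps from any space into $K(\Lambda,1)$ are classified up to homotopy by their effect on $\pi_1$, and both maps induce $\varphi$; notably, this does not require $M_1 \times M_2$ to be aspherical. The remaining ingredients---rational acyclicity of $BA$ for $A$ finite, and persistence of rational essentiality under finite covers---are standard.
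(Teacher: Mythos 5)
Your proof is correct and complete, and it is exactly the argument the paper alludes to: the paper does not reprove Theorem~\ref{translation} but cites \cite[Theorem~1.4]{KoL--09} and describes its proof as ``a diagram chase using the universal properties of classifying maps,'' which is precisely your scheme (pass to the cover realizing the image subgroup, quotient by the putative finite factor, use rational acyclicity of $\operatorname{B}A$ for $A$ finite, and contradict essentiality via the factorization through the lower-dimensional projection). All the supporting steps you flag --- persistence of rational essentiality under finite covers, the rational homology isomorphism for quotients by finite normal subgroups, and the classification of maps into a $K(\Lambda,1)$ by their effect on $\pi_1$ --- are used correctly.
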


The proof of this theorem is essentially a diagram chase using the universal properties of 
classifying maps, and it gives a presentation of $\pi_1(N)$ by a product in which 
the groups $\Gamma_1$ and $\Gamma_2$ are (the images of) $\pi_1(M_1)$ and 
$\pi_1(M_2)$. In particular, since the $M_i$ are compact manifolds, the $\Gamma_i$ are finitely generated.
Therefore, the conclusion 
of Theorem~\ref{translation} can be strengthend: not only is $\pi_1(N)$ presentable by a product,
it is presentable by a product of finitely generated groups.

\subsection*{Overview}
In this paper we discuss presentability by products for several classes of groups, 
and we often consider arbitrary groups which need not be finitely generated, 
and not even countable. 
However, because of the (strengthening of the) theorem above, 
for finitely generated groups presentable by products it is an interesting question
whether the factors in a presentation by a product can be taken to be finitely generated. 
We will show in Section~\ref{s:Abels} that there are indeed examples of finitely generated groups 
that are presentable by products, but the factors cannot both be chosen to be finitely generated
(Theorem \ref{AlaAbels}).

In Section~\ref{sectionSchreier} we introduce the Schreier property for  groups, 
which is  motivated by Schreier's classical theorem~\cite{Schreier} 
on finitely generated subgroups of free groups.
The main new result is that one-ended groups with the Schreier property 
are not presentable by products of finitely generated groups (Theorem \ref{thSchreier}).

In Section~\ref{sectionCohom2} we discuss 
necessary and sufficient conditions 
for groups in some classes to be presentable by products.
More precisely, let $\Gamma$ be a finitely presented infinite group.
If $\Gamma$ is of virtual cohomological dimension at most $2$,
it is presentable by a product if and only if it is virtually either infinite cyclic
or isomorphic to a product of free groups $F_k \times F_\ell$ with $k,\ell \ge 1$.
If $\Gamma$ is of positive deficiency,
$\Gamma$ is presentable by a product if and only if it is virtually either infinite cyclic
or $F_k \times \Z$ with $k \ge 1$
(Theorems \ref{t:cd2} and \ref{t:def}).

In Section~\ref{sectionZariski} we show that Zariski dense subgroups 
of suitable algebraic groups (over fields of characteristic $0$)
are not presentable by products.

Section~\ref{s:applic} contains applications
to Baumslag-Solitar groups,
to fundamental groups of three-manifolds, 
and to Coxeter groups.

\section{Finite generation of the factors in presentations by products}
\label{s:Abels}

Let $\Gamma_1$ and $\Gamma_2$ be subgroups of $\Gamma$ that commute and 
generate $\Gamma$. 
Then the intersection $\Gamma_1\cap\Gamma_2$ is in the centre of $\Gamma$. 
Moreover, the $\Gamma_i$ are normal in $\Gamma$ and we have exact sequences 
\begin{alignat*}{2}
    1\longrightarrow \Gamma_1
     &\longrightarrow \Gamma
     &&\longrightarrow \Gamma_2/(\Gamma_1\cap\Gamma_2) 
     \longrightarrow 1 \text{\makebox[0pt][l]{\ ,}}
\\
    1\longrightarrow \Gamma_1\cap\Gamma_2
     &\longrightarrow \Gamma_2
     &&\longrightarrow \Gamma_2/(\Gamma_1\cap\Gamma_2) 
     \longrightarrow 1 \text{\makebox[0pt][l]{\ ,}}
  \end{alignat*}
and similarly with the roles of $\Gamma_1$ and $\Gamma_2$ reversed.

If we assume $\Gamma$ to be finitely generated, 
the first sequence shows that the quotient $\Gamma_2/(\Gamma_1\cap\Gamma_2)$
is finitely generated. 
Now if, in addition, the intersection $\Gamma_1\cap\Gamma_2$ is finitely generated, 
then the second sequence shows that $\Gamma_2$ is finitely generated. 
We conclude that if $\Gamma$ is finitely generated and has finitely generated centre,
then, in every presentation by a product, 
the two factors $\Gamma_1$ and $\Gamma_2$ are also finitely generated.

If we have a finitely generated $\Gamma$ whose centre is not finitely generated, but contains an element of infinite order, then this 
element generates a central infinite cyclic subgroup $\Gamma_1\subset\Gamma$, 
and we can take $\Gamma_2=\Gamma$ to obtain a presentation of $\Gamma$ 
by a product of finitely generated groups. 
Therefore, the only finitely generated groups that are candidates 
for being presentable by products, 
but never with finitely generated factors $\Gamma_i$, 
are groups whose centre is an infinitely generated torsion group.

Using a construction of Hall~\cite{Hal--61} and Abels~\cite{Abe--77}, we can indeed produce such examples.

\begin{thm}
\label{AlaAbels}
There exists a finitely generated group $\Gamma$ that is presentable by products, 
but in every such presentation at least one of the factors
$\Gamma_i$ is not finitely generated.
\end{thm}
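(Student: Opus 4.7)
The plan is to construct $\Gamma$ as a central quotient of a finitely presented solvable matrix group due to H.~Abels. Fix a prime $p$ and let $A\subset\GL_4(\Z[1/p])$ denote Abels's group of upper triangular matrices
$$\begin{pmatrix}1 & a_{12} & a_{13} & a_{14}\\ 0 & t_2 & a_{23} & a_{24}\\ 0 & 0 & t_3 & a_{34}\\ 0 & 0 & 0 & 1\end{pmatrix}$$
with $t_2,t_3\in\langle p\rangle\subset\Q^*$ and $a_{ij}\in\Z[1/p]$. By a theorem of Abels, $A$ is finitely presented, and its centre $Z(A)$ is the one-parameter subgroup consisting of matrices whose only nonzero off-diagonal entry is $a_{14}$; thus $Z(A)\cong\Z[1/p]$ as an additive group. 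Pick $z_0\in Z(A)$ corresponding to $a_{14}=1$ and put $\Gamma:=A/\langle z_0\rangle$. Then $\Gamma$ is finitely presented, and its centre equals $Z(A)/\langle z_0\rangle\cong\Z[1/p]/\Z$, the Pr\"ufer $p$-group, an infinitely generated torsion abelian group. In particular $Z(\Gamma)$ is infinite, so part (1) of Example~\ref{basicexplspp} already gives a presentation of $\Gamma$ by a product.

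Assume, for contradiction, that some such presentation has commuting finitely generated infinite subgroups $\Gamma_1,\Gamma_2\subset\Gamma$ whose product has finite index. Pass to $Q:=\Gamma/Z(\Gamma)\cong A/Z(A)$ and write $\bar\Gamma_i$ for the images of $\Gamma_i$. First, every finite index subgroup $Q'\subset Q$ has trivial centre: indeed $A$ is Zariski dense in the connected solvable algebraic group $\mathbf B\subset\GL_4$ of upper triangular matrices with first and last diagonal entries $1$, its algebraic centre $\mathbf Z=Z(\mathbf B)$ is the one-parameter subgroup $\G_a$ in position $(1,4)$, and the algebraic centre of $\mathbf B/\mathbf Z$ is trivial, so Zariski density of $Q'$ in $\mathbf B/\mathbf Z$ forces $Z(Q')=1$. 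Second, each $\bar\Gamma_i$ is infinite: if, say, $\bar\Gamma_1$ were finite, then $\Gamma_1\cap Z(\Gamma)$ would have finite index in the finitely generated group $\Gamma_1$, hence be finitely generated, and as a finitely generated subgroup of the locally finite Pr\"ufer group it would be finite, making $\Gamma_1$ itself finite, contrary to assumption. Part (2) of Example~\ref{basicexplspp} therefore applies to $Q$ and forces $Q$ to be virtually a direct product of two infinite groups.

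The main step, and the main obstacle, is ruling this out. Any virtual direct product decomposition $H_1\times H_2\subset Q$ of finite index with both factors infinite yields, on taking Zariski closures, a decomposition $\mathbf B/\mathbf Z=\overline{H_1}\cdot\overline{H_2}$ into two commuting positive-dimensional connected closed subgroups; because $Z(\mathbf B/\mathbf Z)=1$, this is in fact a direct product of algebraic groups. To complete the proof I would examine the action of the diagonal torus $\mathbf T=\G_m^2\subset\mathbf B/\mathbf Z$ on the five remaining unipotent root subgroups in positions $(1,2),(1,3),(2,3),(2,4),(3,4)$: the corresponding weights are $s^{-1},t^{-1},st^{-1},s,t$, spanning three distinct lines in the character lattice $X^*(\mathbf T)\cong\Z^2$. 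In any direct product decomposition $\mathbf B/\mathbf Z=\mathbf H_1\times\mathbf H_2$ with positive-dimensional factors, $\mathbf T$ splits as $\mathbf T_1\times\mathbf T_2$ into subtori (the degenerate case $\dim\mathbf T_i=0$ being immediately excluded because then $\mathbf H_i$ would be unipotent, centralized by $\mathbf T$, and hence trivial), and all five weights would be confined to $X^*(\mathbf T_1)\cup X^*(\mathbf T_2)$, i.e.\ to only two lines in $\Z^2$---the desired contradiction.
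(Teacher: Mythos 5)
Your construction is correct, and it follows the paper's overall strategy: take an Abels-type matrix group over $\Z[1/p]$, quotient by a central copy of $\Z$ to obtain a finitely generated group whose centre is the infinitely generated torsion group $\Z[1/p]/\Z$, observe that a presentation by a product with both factors finitely generated would descend to a presentation of $\Gamma/C(\Gamma)$ by a product (your argument for this reduction, via finiteness of finitely generated subgroups of a locally finite group, is a valid variant of the paper's), and then rule that out. The two endpoints differ, however. You use Abels's $4\times 4$ finitely presented group rather than the $3\times 3$ group $A_3$ used in the paper, which is only claimed to be finitely generated; this buys the stronger conclusion that $\Gamma$ may be taken finitely presented, at the cost of a larger computation. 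More substantially, for the key lemma that $\Gamma/C(\Gamma)$ is not presentable by products, the paper exhibits the image of the diagonal subgroup as an infinite acentral subgroup of infinite index and invokes the purely group-theoretic Proposition~3.2 of \cite{KoL--13}, whereas you use Zariski density of $\Gamma/C(\Gamma)$ in $\mathbf{B}/\mathbf{Z}$ and show that this algebraic group admits no decomposition into two commuting positive-dimensional closed normal subgroups by a torus--weight count (five nonzero weights spanning three distinct lines in $X^*(\mathbf{T})\cong\Z^2$ cannot lie on two). This is in effect an instance of the paper's own Theorem~\ref{inGalgnotproduct} combined with the style of reasoning in Example~\ref{exirrLiealg}, applied to a solvable group not on that list, and it is sound. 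Two small points: you should pass to identity components of the Zariski closures before asserting connectedness, exactly as in Lemma~\ref{LemmaG=G1G2}; and the detour through Example~\ref{basicexplspp}\ref{2DEbasicexplspp} is unnecessary, since taking closures of the two commuting infinite images directly already produces the algebraic decomposition you then refute.
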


\begin{proof}
Let $p$ be a prime number. We denote by $A_3$ the group of matrices of the  form
$$
\begin{pmatrix}
1 & x & z\\
0 & u & y\\
0 & 0 & 1
\end{pmatrix}
 \ \ \ \ \textrm{with} \ \ x, y, z\in\Z[1/p] , \ u\in\Z[1/p]^\times \ .
$$
This group is finitely generated~\cite{Abe--77,AbB--87}. 
Its centre consists of the matrices satisfying $u=1$ and $x=y=0$. 
Therefore, the centre is isomorphic to the additive group $(\Z[1/p],+)$ 
via the map that sends a matrix to its
upper-right-hand entry $z$. 
In particular, the centre is not finitely generated.

The infinite cyclic group $(\Z,+)$ is embedded in the centre of $A_3$ 
as the elements for which $z\in\Z$.
Define $\Gamma = A_3/\Z$ for this central embedding $\Z\subset A_3$. 
Then $\Gamma$ is finitely generated because $A_3$ is, 
and the centre of $\Gamma$ is isomorphic to the additive group $\Z[1/p]/\Z$, i.e.~an 
abelian torsion group that is not finitely generated.

Since $\Gamma$ has infinite centre $C(\Gamma)$, 
it is presentable by the product $C(\Gamma)\times\Gamma$.

Next, let $\Gamma_1$ and $\Gamma_2$ be 
a pair of commuting infinite subgroups in $\Gamma$ for which the multiplication
$\varphi\colon\Gamma_1\times\Gamma_2\longrightarrow\Gamma$ 
has image of finite index in $\Gamma$. 
If the intersection $\Gamma_i\cap C(\Gamma)$ has finite index in $\Gamma_i$, 
then it is infinite. As an infinite subgroup
of the abelian torsion group $C(\Gamma)$ it cannot be finitely generated, 
and so $\Gamma_i$ is also not finitely generated. 
Thus, the only possibility for both $\Gamma_i$ to be finitely generated is for
$\Gamma_i\cap C(\Gamma)$ to be of infinite index in $\Gamma_i$ for both $i=1$ and $i=2$. 
In this case the images of the $\Gamma_i$ in $\Gamma/C(\Gamma)$ are both infinite, 
and of course they commute and generate a subgroup of finite index, 
leading to the conclusion that $\Gamma/C(\Gamma)$ is presentable by a product.
The proof of the theorem is therefore completed by the next lemma.
\end{proof}

\begin{lem}
\label{lemmaforAbels}
The group $\Gamma/C(\Gamma)$ is not presentable by products,
where $\Gamma = A_3 / \Z$ as in the previous proof.
\end{lem}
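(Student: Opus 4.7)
The plan is to realize $G := \Gamma/C(\Gamma) = A_3/Z(A_3)$ concretely as a semidirect product $G = N \rtimes T$, where $N \cong \Z[1/p]^2$ is the image of the unipotent subgroup of $A_3$ (the matrices with $u=1$) and $T \cong \Z[1/p]^\times$ is the image of the diagonal subgroup, with $T$ acting on $N$ via the characters $u \mapsto u^{-1}$ and $u \mapsto u$. Two structural facts will matter throughout: $T$ is virtually cyclic (isomorphic to $\Z \oplus \Z/2$), whereas $N \cong \Z[1/p]^2$ is not finitely generated.

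The central technical claim I would prove is that for every $g \in G \setminus N$, the centralizer $C_G(g)$ meets $N$ trivially. This reduces to a short commutation computation in the semidirect product: if $g$ has $T$-component $u_0 \neq 1$, then any $(x,y,1) \in N$ commuting with $g$ must satisfy $x(u_0 - 1) = 0$ and $y(u_0 - 1) = 0$, forcing $x = y = 0$ because $\Z[1/p]$ is an integral domain. Consequently the projection $G \to T$ restricts to an injection on $C_G(g)$, so $C_G(g)$ is virtually cyclic.

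Assuming for contradiction a presentation of $G$ by a product, with $H_1, H_2$ commuting infinite subgroups and $H := H_1 H_2$ of finite index in $G$, I would then split into three cases. If both $H_i \subseteq N$, then $H \subseteq N$ cannot have finite index since $G/N \cong T$ is infinite. If $H_1 \subseteq N$ but $H_2 \not\subseteq N$, choosing any $g \in H_2 \setminus N$ forces $H_1 \subseteq C_G(g) \cap N = \{e\}$, contradicting that $H_1$ is infinite. If neither factor lies in $N$, the centralizer claim applied to elements $g_i \in H_i \setminus N$ (with the roles swapped) shows that both $H_1$ and $H_2$ are virtually cyclic.

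The third case is the main obstacle. Here $H$ is a quotient of the virtually $\Z^2$ group $H_1 \times H_2$, so $H$ is virtually abelian of rank at most $2$, hence polycyclic, and every subgroup of $H$ is finitely generated. Since $H$ has finite index in $G$, the intersection $N \cap H$ has finite index in $N$ and would therefore be finitely generated. But every finite-index subgroup of $\Z[1/p]^2$ fails to be finitely generated---otherwise $\Z[1/p]^2$ itself, being a finite union of cosets of a finitely generated subgroup, would be finitely generated. This contradiction completes the argument.
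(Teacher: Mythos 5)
Your argument is correct. The heart of it---the computation showing that any element of $N\cong\Z[1/p]^2$ commuting with an element whose diagonal part $u_0$ is $\neq 1$ must be trivial, because $x(u_0-1)=y(u_0-1)=0$ forces $x=y=0$---is exactly the computation the paper performs. The difference is in how that computation is exploited. The paper packages it as the statement that the image $T$ of the diagonal subgroup is an infinite \emph{acentral} subgroup of infinite index (for $g\in T\smallsetminus\{1\}$ one in fact gets $C_G(g)=T$ on the nose), and then invokes Proposition~3.2 of \cite{KoL--13} as a black box to conclude non-presentability. You instead give a self-contained case analysis on whether the factors $H_i$ lie in $N$; your cases (1) and (2) are immediate, and your case (3) replaces the general acentral-subgroup criterion by an argument specific to this group: both factors embed in the virtually cyclic group $T$, so $H_1H_2$ is finitely generated virtually abelian, hence Noetherian, which is incompatible with containing the finite-index subgroup $N\cap H$ of the non-finitely-generated group $\Z[1/p]^2$. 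What your route buys is independence from the external reference and a proof readable from the semidirect product structure $G=N\rtimes T$ alone; what it costs is that the final contradiction leans on a special feature of this example ($N$ and all its finite-index subgroups fail to be finitely generated) rather than on a reusable general principle. Both proofs are complete and correct.
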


\begin{proof}
This is an application of Proposition~3.2~in \cite{KoL--13},
where it was proved that groups containing infinite acentral subgroups of infinite index
are not presentable by products.
A subgroup $G$ of a group $H$ is \textbf{acentral} if, for every $g \in G \smallsetminus \{1\}$,
the centralizer $C_H(g)$ is contained in $G$.

In the present case, we choose for the subgroup $G$ of $\Gamma/C(\Gamma)$
the image of the subgroup of diagonal matrices in $A_3$.
Every $g\neq e\in G$ is represented by a matrix of the form 
$$
\begin{pmatrix}
1 & 0 & 0\\
0 & \pm p^n & 0\\
0 & 0 & 1
\end{pmatrix} \ \ \textrm{with} \ \ n\in\Z\setminus\{ 0\} \ .
$$
If a matrix in $A_3$ represents an element of the centralizer of $g$ in $\Gamma/C(\Gamma)$, 
then a straightforward check shows that in that matrix $x=y=0$. 
Therefore, the image of that matrix in $\Gamma/C(\Gamma)$ lies in $G$.
Hence $G$ is an infinite acentral subgroup of infinite index in $\Gamma/C(\Gamma)$.
\end{proof}

\section{Schreier groups}
\label{sectionSchreier}

In 1927, Schreier \cite{Schreier} proved that 
every finitely generated non-trivial normal subgroup 
of a free group has finite index. 
The result was extended to surface groups of 
genus at least two by Griffiths \cite{Griffiths},
to non-trivial free products by Baumslag \cite{Bau--66},
and to some free products with amalgamation by Karrass and Solitar 
(see \cite{KaS--73} and the references given there).
These results motivate the following definition.

\begin{defn}
\label{defSchreier}
A group is a \textbf{Schreier group}, 
or has the \textbf{Schreier property}, 
if every finitely generated normal  subgroup is either finite or of finite index.
\end{defn}

The following definition appears in \cite{KaS--73}:
a group has the \emph{finitely generated normal property}
if all its non-trivial finitely generated normal subgroups are of finite index.
The groups mentioned above, from papers by Schreier and others,
do not have non-trivial finite normal subgroups at all,
and have the finitely generated normal property.
But we insist that a Schreier group may have non-trivial finite normal subgroups.
The definition of~\cite{KaS--73} was rediscovered in~\cite{Cat--03}, 
where the results of Schreier and Griffiths are reproved, without any 
reference to the earlier literature.

Before listing some examples of Schreier groups, we record the following.

\begin{lem}
\label{prop:S} 
Let $\Gamma$ be a group.
\begin{enumerate}[noitemsep,label=(\arabic*)]
\item\label{1DEprop:S}
Let $\Gamma_0$ be a subgroup of finite index in $\Gamma$.
Then $\Gamma_0$ is a Schreier group if and only if $\Gamma$ is.
\item\label{2DEprop:S}
Let $F$ be a finite normal subgroup in $\Gamma$.
Then $\Gamma/F$ is a Schreier group if and only if $\Gamma$ is.
\end{enumerate}
\end{lem}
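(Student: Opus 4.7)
Part (2) reduces to the subgroup correspondence for quotients. For $N \triangleleft \Gamma$ finitely generated, the image $NF/F$ is a finitely generated normal subgroup of $\Gamma/F$, and because $F$ is finite the dichotomy ``finite or of finite index'' transfers back to $N$. Conversely, the preimage in $\Gamma$ of a finitely generated normal subgroup of $\Gamma/F$ is itself normal and finitely generated (since $F$ is finite), so Schreier in $\Gamma$ applies.

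For Part (1), the direction ``$\Gamma_0$ Schreier $\Rightarrow \Gamma$ Schreier'' is immediate via intersection with $\Gamma_0$: given $N \triangleleft \Gamma$ finitely generated, $N \cap \Gamma_0$ is normal in $\Gamma_0$ and has finite index in $N$ (since $\Gamma_0$ has finite index in $\Gamma$), so it is itself finitely generated; the Schreier dichotomy in $\Gamma_0$ then lifts to $N$ via the bound $[\Gamma : N] \leq [\Gamma : \Gamma_0] \cdot [\Gamma_0 : N \cap \Gamma_0]$ and $|N| \leq [\Gamma : \Gamma_0] \cdot |N \cap \Gamma_0|$.

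For the converse, the plan is first to reduce to the case $\Gamma_0 \triangleleft \Gamma$ by replacing $\Gamma_0$ with its normal core in $\Gamma$; the direction just proved lets us pass Schreier-ness between $\Gamma_0$ and its normal core. So assume $\Gamma_0 \triangleleft \Gamma$ has finite index, and let $N_0 \triangleleft \Gamma_0$ be finitely generated. Since $\Gamma_0 \subseteq N_\Gamma(N_0)$, the subgroup $N_0$ has only finitely many $\Gamma$-conjugates $N^{(1)} = N_0, \ldots, N^{(k)}$, each normal in $\Gamma_0$. Their product $\overline{N} = N^{(1)} \cdots N^{(k)}$ is a subgroup (a product of normal subgroups of $\Gamma_0$), finitely generated, contained in $\Gamma_0$, and normal in $\Gamma$; by Schreier of $\Gamma$, $\overline{N}$ is finite or of finite index. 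In the finite case $N_0 \subseteq \overline{N}$ is finite and we are done; in the finite-index case one must still argue that $N_0$ itself --- not merely $\overline{N}$ --- has finite index in $\Gamma_0$.

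This last step is the main obstacle I anticipate. A priori $N_0$ can have infinite index in $\overline{N}$, as witnessed by $\Gamma = \Z^2 \rtimes \Z/2$ with $N_0 = \Z \times \{0\}$ (where, however, $\Gamma$ itself is not Schreier, a non-Schreier witness being the ``diagonal'' $\{(n,n) : n \in \Z\}$). The resolution must invoke Schreier of $\Gamma$ on further canonical finitely generated normal subgroups arising from the $N^{(i)}$, such as the $\Gamma$-core $\bigcap_i N^{(i)}$ or symmetrized subgroups of the shape $\langle x \cdot g_2 x g_2^{-1} \cdots g_k x g_k^{-1} : x \in N_0 \rangle$ (transparently a subgroup and $\Gamma$-normal when $\Gamma_0$ is abelian); the Schreier dichotomy for these, together with the control it yields on $\overline{N}/N_0$, forces $[\overline{N} : N_0] < \infty$ and hence $[\Gamma_0 : N_0] < \infty$. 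The delicate combinatorial point is to carry out the symmetrization when $\Gamma_0$ is non-abelian, where the set $\{\prod_i g_i x g_i^{-1} : x \in N_0\}$ need not itself be a subgroup.
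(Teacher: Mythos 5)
Your part (2) and the implication ``$\Gamma_0$ Schreier $\Rightarrow \Gamma$ Schreier'' of part (1) are correct, and they coincide with what the paper intends (the paper writes out only one implication of (2) and declares the rest straightforward, so there is no hidden trick you are missing). The step you explicitly leave open --- deducing that $N_0$ itself, and not just the product $\overline{N}$ of its $\Gamma$-conjugates, has finite index --- is indeed the crux, but your hope that some further symmetrized or core subgroup will close it cannot be realized: the implication ``$\Gamma$ Schreier $\Rightarrow \Gamma_0$ Schreier'' is false as stated, so this half of part (1) (and hence the lemma as a whole) admits no proof.

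A counterexample is obtained from your own $\Z^2\rtimes\Z/2$ example by replacing $\Z$ with a finitely generated infinite \emph{simple} group $A$ (for instance Thompson's group $V$). Put $\Gamma_0=A\times A$ and $\Gamma=(A\times A)\rtimes\Z/2$, with the involution swapping the factors. Then $N_0=A\times\{1\}$ is finitely generated, normal in $\Gamma_0$, infinite and of infinite index, so $\Gamma_0$ is not a Schreier group. On the other hand, since $A$ is simple and non-abelian (hence centreless), the only normal subgroups of $A\times A$ are $\{1\}$, $A\times\{1\}$, $\{1\}\times A$ and $A\times A$: a normal subgroup meeting both factors trivially would be the graph of an automorphism $\alpha$, and normality under conjugation by $(b,1)$ forces $\alpha(bab^{-1})=\alpha(a)$ for all $a,b$, i.e.\ $A$ abelian. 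Only $\{1\}$ and $A\times A$ are swap-invariant, so every normal subgroup of $\Gamma$ intersects $\Gamma_0$ in $\{1\}$ or in all of $\Gamma_0$, hence has order at most $2$ or index at most $2$; in particular $\Gamma$ is a Schreier group with a non-Schreier subgroup of index $2$. This also explains why your proposed rescue fails: the candidate $\Gamma$-normal subgroups built from the $N^{(i)}$ (their product, their intersection, diagonal-type subgroups) are here either all of $\Gamma_0$, trivial, or not normal in $\Gamma$, so the Schreier property of $\Gamma$ never gets to see $N_0$. You should therefore record part (1) only in the direction ``$\Gamma_0$ Schreier $\Rightarrow \Gamma$ Schreier'' (your proof of which is fine), and note that the converse direction is also the one invoked later in the paper's Theorem on groups with the Schreier property, where the same example causes trouble.
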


\begin{proof}
All implications are straightforward.
Let us write the details of part of \ref{2DEprop:S},
and leave it to the reader to check the other implications.
We assume that $\Gamma/F$ has the Schreier property,
we consider a normal  finitely generated infinite subgroup $N$ of $\Gamma$,
and we have to show that the index of $N$ in $\Gamma$ is finite.

Inside $\Gamma / F$, the quotient $N /(F \cap N)$ is a subgroup
which is normal, finitely generated, and infinite.
Since $\Gamma / F$ has the Schreier property, $N / (F \cap N)$ is of finite index in $\Gamma / F$.
It follows that $N$ is of finite index in $\Gamma$.
\end{proof}

\begin{exe}
\label{exSchreier}
Here are some examples of groups with the Schreier property.
\begin{enumerate}[noitemsep,label=(\arabic*)]
\item\label{1DEexSchreier}
Two-ended groups are virtually infinite cyclic, and therefore have the Schreier property.
\item\label{2DEexSchreier}
Every irreducible lattice $\Gamma$ in a connected semi-simple Lie group 
with finite centre and real rank $\geq 2$ 
has the Schreier property by the Margulis normal subgroup theorem. 
(There is a much stronger result in \cite[Chap.~4, Section~4]{Mar--91}.)
Note that the Margulis theorem implies more: every normal subgroup $\Gamma$
either is finite or has finite index, and is therefore finitely generated \emph{a posteriori.} 
\item\label{3DEexSchreier}
Every finitely generated
group with positive first $\ell^2$-Betti number is a Schreier group 
by a result of Gaboriau \cite{gaboriauIHES},
generalizing the work of L\"uck \cite{lueckMA}. 
This class of groups encompasses the original cases 
considered by Schreier and Griffiths, 
and many others, such as groups with infinitely many ends,
groups with deficiency $\geq 2$, and non-abelian limit groups in the sense of Sela.
\end{enumerate}
\end{exe}

While the Schreier property arises naturally in other contexts, 
for example in the study of K\"ahler groups, compare~\cite{Cat--03,Kot--12},
our interest in it here stems from the following result.

\begin{thm}
\label{thSchreier}
A finitely generated group with the Schreier property 
is presentable by a product of finitely generated groups
if and only if it is  virtually infinite cyclic, equivalently, if it is two-ended.
\end{thm}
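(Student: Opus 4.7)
I would prove the two directions separately. The ``if'' direction is immediate: if $\Gamma$ is virtually infinite cyclic, pick $t \in \Gamma$ generating a finite-index subgroup $\langle t \rangle \cong \Z$, and define $\varphi \colon \Z \times \Z \to \Gamma$ by $\varphi(m,n) = t^{m+n}$. Its image is $\langle t \rangle$, of finite index, and each slice $\Z \times \{0\}$, $\{0\} \times \Z$ surjects onto this infinite subgroup, giving a presentation by a product with cyclic (hence finitely generated) factors.

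For the converse, suppose $\varphi \colon \Gamma_1 \times \Gamma_2 \to \Gamma$ is a presentation by a product with both $\Gamma_i$ finitely generated. Using Lemma~\ref{prop:S}\ref{1DEprop:S}, I first replace $\Gamma$ by the finite-index image of $\varphi$, which inherits the Schreier property, so that $\varphi$ may be assumed surjective. Write $N_i = \varphi(\Gamma_i)$. Then each $N_i$ is finitely generated, infinite, and normal in $\Gamma$ (this last uses surjectivity); the two subgroups $N_1, N_2$ commute elementwise and together generate $\Gamma$. The Schreier property applied to each $N_i$ forces $[\Gamma:N_i] < \infty$. The key next observation is that $N_1 \cap N_2 \subseteq Z(\Gamma)$: any element of $N_1 \cap N_2$ centralizes $N_1$ (since it lies in $N_2$) and centralizes $N_2$ (since it lies in $N_1$), hence centralizes $\Gamma = \langle N_1, N_2 \rangle$. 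Because $N_1 \cap N_2$ is an intersection of two finite-index subgroups, $Z(\Gamma)$ has finite index in $\Gamma$, so $\Gamma$ is virtually abelian.

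Being finitely generated and virtually abelian, $\Gamma$ is then virtually $\Z^n$ for some $n \ge 0$. The case $n=0$ is ruled out by the infiniteness of the $N_i$. For $n \ge 2$ the finite-index subgroup $\Z^n$ would be a Schreier group by Lemma~\ref{prop:S}\ref{1DEprop:S}, contradicted by the normal, finitely generated, infinite-index subgroup $\Z^{n-1} \subset \Z^n$. Hence $n=1$ and $\Gamma$ is virtually infinite cyclic. The only delicate point in the whole argument is the initial reduction to a surjective $\varphi$, which is needed so that the $N_i$ are normal in $\Gamma$ itself rather than merely in $\varphi(\Gamma_1 \times \Gamma_2)$; everything after that is a straightforward combination of the Schreier hypothesis with the structure theorem for finitely generated abelian groups.
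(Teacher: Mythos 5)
Your argument is correct and follows essentially the same route as the paper's proof: pass to a finite-index subgroup to make $\varphi$ surjective, use normality plus the Schreier property to force the images to have finite index, deduce that the central intersection has finite index so $\Gamma$ is virtually abelian, and then rule out rank $\ge 2$ via the Schreier property. Your version merely spells out the rank-one step (via the normal infinite-index copy of $\Z^{n-1}$ in $\Z^n$) that the paper leaves implicit.
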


\begin{proof}
Clearly every two-ended group is presentable by a product and has the Schreier property, 
cf.\ Example \ref{exSchreier}\ref{1DEexSchreier}. 
Conversely, let $\Gamma$ be a finitely generated group with the Schreier property
that is presentable by a product of finitely generated infinite subgroups $\Gamma_1$ and $\Gamma_2$.
After replacing $\Gamma$ by a finite index subgroup, denoted again by $\Gamma$, we may assume that the multiplication $\Gamma_1\times\Gamma_2\longrightarrow\Gamma$
is surjective. It follows that the $\Gamma_i$ are normal in $\Gamma$. Since they are finitely generated,
 the Schreier property implies that they are of finite index in $\Gamma$. 
Therefore the intersection $\Gamma_1\cap\Gamma_2$ is also of finite index in $\Gamma$, 
and since $\Gamma_1\cap\Gamma_2$ is central 
we conclude that $\Gamma$ is virtually abelian. 
We pass to a finite index abelian subgroup. 
Since this is infinite, finitely generated, and  has the Schreier property, 
its rank is one, and so $\Gamma$ is virtually infinite cyclic.
\end{proof}

In view of Examples~\ref{exSchreier}\ref{2DEexSchreier} 
and~\ref{exSchreier}\ref{3DEexSchreier}, 
Theorem~\ref{thSchreier} is a common generalization
of Propositions~4.1 and 8.1 in~\cite{KoL--13}. 
One point of this generalization is that it is likely that other criteria 
will emerge in the future that guarantee the Schreier property,
other than the positivity of the first $\ell^2$-Betti number. 
In special cases it is already known that positivity of the rank gradient 
(in the sense of Lackenby), 
or cost strictly larger than $1$ (in the sense of Levitt and Gaboriau) 
imply the Schreier property; compare with Proposition 13 in \cite{AbN--12}.

Several proofs were given in~\cite{KoL--13} 
for the fact that groups with infinitely many ends are not presentable by products. 
The new content of Theorem~\ref{thSchreier} is in showing that one-ended groups
with the Schreier property are not presentable by products of finitely generated groups.

\section{Groups of cohomological dimension at most two}
\label{sectionCohom2}

The \textbf{cohomological dimension} $\cd (\Gamma)$ of a group $\Gamma$
is the maximum (possibly $\infty$) of the integers $n$ such that $H^n(\Gamma, A) \ne 0$
for some $\Z[\Gamma]$-module $A$.

\begin{exe}
\label{excohom2}
Here are two classes of basic examples:
\begin{enumerate}[noitemsep,label=(\arabic*)]
\item\label{1DEexcohom2}
Surface groups are of cohomological dimension $2$. More generally,
by Lyndon's theorem, torsion-free one-relator groups are of cohomological at most $2$;
cf.~\cite[Theorem 7.7]{Bie--81}.
\item\label{2DEexcohom2}
Torsion-free fundamental groups of compact three-manifolds with non-empty non-spherical 
boundaries are of cohomological dimension at most $2$.
In particular, groups of non-trivial knots are of cohomological dimension $2$.
\end{enumerate}
\end{exe}

The next theorem is a consequence of results of Bieri \cite{Bie--76, Bie--81}.

\begin{thm}
\label{t:cd2}
An infinite finitely presented group $\Gamma$ of virtual cohomological dimension at most two
is presentable by a product if and only if it is either virtually infinite cyclic 
or virtually $F_k \times F_l$ with $k$, $l\geq 1$. 
\end{thm}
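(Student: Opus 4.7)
The ``if'' direction is immediate from Example~\ref{basicexplspp}: a virtually infinite cyclic group contains a central infinite cyclic subgroup of finite index, so part~\ref{1DEbasicexplspp} applies, while a virtual $F_k\times F_l$ with $k,l\geq 1$ is already a non-trivial product of commuting infinite subgroups; invariance under commensurability is part~\ref{3DEbasicexplspp}.

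For the converse, suppose $\Gamma$ is infinite, finitely presented, of virtual cohomological dimension at most $2$, and presentable by a product. Passing to the intersection of a torsion-free finite-index subgroup (which exists since $\mathrm{vcd}(\Gamma)$ is finite) with the finite-index image $\varphi(\Gamma_1\times\Gamma_2)$, and invoking part~\ref{3DEbasicexplspp} of Example~\ref{basicexplspp}, we may assume $\Gamma$ is torsion-free of $\cd\leq 2$ and $\Gamma=\Gamma_1\Gamma_2$ for two commuting infinite normal subgroups $\Gamma_1,\Gamma_2$. If $\cd(\Gamma)\leq 1$, then $\Gamma$ is free by Stallings--Swan; since centralizers of nontrivial elements in free groups are cyclic and $\Gamma_1\subseteq C_\Gamma(\gamma)$ for each $\gamma\in\Gamma_2\setminus\{1\}$, both $\Gamma_i$, and hence $\Gamma$ itself, are cyclic.

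When $\cd(\Gamma)=2$ the plan is to exploit Bieri's theorem on cohomological dimension $2$~\cite{Bie--76, Bie--81}: a finitely generated normal subgroup of infinite index in a finitely presented group of $\cd=2$ is free, and the corresponding quotient is of $\cd\leq 1$. To apply this I first produce finitely generated approximations of $\Gamma_1,\Gamma_2$: write each element of a finite generating set of $\Gamma$ as $g_1g_2$ with $g_i\in\Gamma_i$, and let $\tilde\Gamma_i$ be generated by these finitely many components together with one additional infinite-order element of $\Gamma_i$. The commutation $[\Gamma_1,\Gamma_2]=1$ implies that $\tilde\Gamma_1,\tilde\Gamma_2$ are commuting, infinite, finitely generated normal subgroups of $\Gamma$ satisfying $\tilde\Gamma_1\tilde\Gamma_2=\Gamma$. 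In the main case, where both quotients $\Gamma/\tilde\Gamma_i$ are infinite, Bieri's theorem makes each $\tilde\Gamma_i$ free; since $\tilde\Gamma_1\cap\tilde\Gamma_2$ lies in the centre of each factor, it is trivial whenever either factor is non-abelian free, so the map $\tilde\Gamma_1\times\tilde\Gamma_2\to\Gamma$ is an isomorphism and exhibits $\Gamma$ as $F_k\times F_l$.

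The main obstacle is the degenerate subcase in which some $\Gamma/\tilde\Gamma_i$ is finite; then $\Gamma$ is virtually equal to the free group $\tilde\Gamma_j$ for $j\neq i$ and has infinite centre coming from $\tilde\Gamma_i\cap\tilde\Gamma_j$. Here one invokes a second result of Bieri on $FP$ groups of $\cd=2$ with an infinite-order central element: the quotient by a central infinite cyclic subgroup has $\cd\leq 1$, and since $H^2(F,\Z)=0$ for any free group $F$ the central extension splits after passage to a finite-index subgroup, yielding $\Gamma$ virtually $\Z\times F_k=F_1\times F_k$. The more delicate points are the careful analysis of $\tilde\Gamma_1\cap\tilde\Gamma_2$ when one of the factors is infinite cyclic (so that the centralizer of any nontrivial element is the whole factor) and verifying that the finitely generated approximations $\tilde\Gamma_i$ do satisfy the finiteness hypotheses required to feed them into Bieri's inequalities.
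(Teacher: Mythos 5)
Your overall strategy --- reduce to a torsion-free group of cohomological dimension at most $2$ and feed commuting normal subgroups into Bieri's structure theorems --- is the same as the paper's, but there is a genuine gap at the central step. The form of Bieri's theorem that the paper uses (Corollary~8.6 of \cite{Bie--81}: a normal subgroup of infinite index in a group of cohomological dimension $2$ is free) requires the normal subgroup to be of type $FP_2$, e.g.\ finitely presented, not merely finitely generated. Your subgroups $\tilde\Gamma_i$ are finitely generated by construction, but nothing in your argument makes them $FP_2$, so the assertion that ``Bieri's theorem makes each $\tilde\Gamma_i$ free'' in your main case is unsupported. You also cannot extract finite presentability of the $\tilde\Gamma_i$ from the direct product decomposition, because you only obtain that decomposition \emph{after} you know the factors are free; as written the argument is circular.

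The paper breaks this circle by splitting on the intersection $\Gamma_1\cap\Gamma_2$ rather than on the index of the factors. If the intersection is trivial, $\varphi$ is an isomorphism $\Gamma_1\times\Gamma_2\cong\Gamma$, so each $\Gamma_i$ is a retract of the finitely presented group $\Gamma$ and hence finitely presented; only then is Corollary~8.6 invoked. If the intersection is non-trivial, it is an infinite central subgroup of $\Gamma$, and Bieri's results on centres of finitely presented groups of cohomological dimension $2$ (Corollaries~8.7 and~8.9 of \cite{Bie--81}) are applied to $\Gamma$ itself, with no input needed about the factors at all --- which also makes your finitely-generated-approximation construction unnecessary. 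You could repair your argument by making the same case distinction on $\tilde\Gamma_1\cap\tilde\Gamma_2$, which is central. Two smaller points: in your degenerate case the sentence ``$\Gamma$ is virtually equal to the free group $\tilde\Gamma_j$'' is wrong (by Serre's theorem a finite-index subgroup of a torsion-free group of cohomological dimension $2$ again has cohomological dimension $2$, so it cannot be free), although the conclusion you actually use --- that $\Gamma$ has infinite centre --- is correct and suffices; and the subcase you flag but do not settle, where both $\tilde\Gamma_i$ are infinite cyclic with non-trivial intersection, is handled by noting that $\Gamma$ is then a torsion-free abelian group generated by two elements, hence $\Z$ or $\Z^2$.
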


As usual, $F_k$ denotes a free group on $k$ generators; note that $F_1 = \Z$.

\begin{proof}
If $\Gamma$ is of virtual cohomological dimension at most $2$ we replace it by a finite index
subgroup that is of actual cohomological dimension at most $2$. This does not affect 
presentability by products as recalled in Example \ref{basicexplspp}.

Our discussion of groups of cohomological dimension $2$ is based on the results of Bieri first 
published in~\cite{Bie--76}. 
We shall use the more leisurely and polished \cite{Bie--81} as our reference.

Suppose $\Gamma$ is a non-trivial finitely presented group 
of cohomological dimension at most $2$. Then $\Gamma$ is torsion-free.
The cohomological dimension is invariant under further passages to subgroups of finite index by Serre's theorem, 
cf.~\cite[Theorem~5.11]{Bie--81}.
Thus we may freely replace
$\Gamma$ by subgroups of finite index as needed.

Let $\Gamma_1$, $\Gamma_2 \subset \Gamma$ 
be commuting infinite subgroups  so that the multiplication 
\begin{equation*}
\varphi\colon\Gamma_1 \times \Gamma_2 \longrightarrow \Gamma
\end{equation*}
is surjective. 
(Here we already tacitly pass to a subgroup of finite index to get surjectivity.) 

If $\Gamma_1 \cap \Gamma_2$ is trivial, then $\varphi$ is an isomorphism 
by \cite[Lemma~3.3]{KoL--09}. 
It follows that both $\Gamma_i$ are finitely presented and of cohomological dimension at most $2$.
Since both $\Gamma_i$ are normal and of infinite index in $\Gamma$, 
they are free by a result of Bieri \cite[Corollary~8.6]{Bie--81}.
Thus $\varphi^{-1}$ is an isomorphism
 between $\Gamma$ and $F_k \times F_\ell$ for some $k, \ell \geq 1$.

If $\Gamma_1 \cap \Gamma_2$ is non-trivial, then it is infinite since $\Gamma$ is torsion-free. 
Moreover, $\Gamma_1 \cap \Gamma_2$ is in the centre of $\Gamma$
 by \cite[Lemma~3.3]{KoL--09}. 
Thus $\Gamma_1 \cap \Gamma_2$ is free abelian of rank one or two. 
If the rank is $=2$, then $\Gamma$ is abelian 
and itself isomorphic to $\Z^2$ by \cite[Corollary~8.9]{Bie--81}.
If $\Gamma_1 \cap \Gamma_2 = \Z$, 
then another result of Bieri \cite[Corollary~8.7]{Bie--81} shows that the quotient of 
$\Gamma$ by the central subgroup $\Gamma_1 \cap \Gamma_2$ 
is virtually a finitely generated free group. 
Passing to a finite index subgroup we may assume that the quotient is free, 
and then, passing to a finite index subgroup once more, 
we see that the central extension
\begin{equation*}
1 \longrightarrow \Z \longrightarrow \Gamma \longrightarrow F_k \longrightarrow 1 ,
\end{equation*}
splits.

We have now proved that if a finitely presented group $\Gamma$ 
of cohomological dimension $\leq 2$  is presentable by a product, 
then it is virtually $F_k \times F_\ell$ for some $k, \ell \geq 0$, but not both $=0$. 
If either $k$ or $\ell$ vanishes, then  $\Gamma$ is virtually free. 
Since it is presentable by a product, it is virtually $\Z$ 
(see Example \ref{basicexplspp}). 
Thus, our conclusion is that $\Gamma$ is virtually infinite cyclic, 
or virtually a product  $F_k \times F_\ell$ with both $k, \ell \geq 1$. 
Conversely, every group of this form is presentable by products.
This completes the proof of Theorem~\ref{t:cd2}.
\end{proof}

We now apply Theorem~\ref{t:cd2} to groups of positive deficiency.
The \textbf{deficiency} of a finitely presented group $\Gamma$
is the maximum $\DEF (\Gamma)$
over all presentations of the difference of the number of generators 
and the number of relators
(this maximum is finite, by \cite[Lemma 1.2]{Eps--61}).

\begin{exe}
\label{exdeficiency}
Here are some basic examples:
\begin{enumerate}[noitemsep,label=(\arabic*)]
\item\label{1DEexdeficiency}
For non-abelian free groups: $\DEF (F_k) = k$.
\item\label{2DEexdeficiency}
$\DEF (\Gamma) = m-1$ for a group $\Gamma$
having a presentation with $m$ generators and one non-trivial relation \cite[Lemma 1.7]{Eps--61}.
\item\label{3DEexdeficiency}
Groups of classical knots have deficiency $1$. More generally,
$\DEF (\pi_1(M))) \ge 1$ for every oriented connected compact three-manifold $M$ with non-empty 
non-spherical boundary \cite[Lemma 2.2]{Eps--61}.
\item\label{4DEexdeficiency}
By (2) above, an orientable surface group of genus $g$ has deficiency $2g-1$. This still holds
for orbifold groups of genus $g$ by \cite[Lemma~2]{Kot--12}, although these are not one-relator groups
in general.
\end{enumerate}
\end{exe}

The deficiency has the following relationship to the first $\ell^2$-Betti number $\ltb$.

\begin{prop}[\cite{Hil--97}]
\label{p:hill}
For every finitely presented group $\Gamma$, we have
\begin{equation}
\tag{*}
\label{(*)}
\DEF (\Gamma) \leq 1+ \ltb (\Gamma) \ .
\end{equation}
If equality holds, then 
the presentation complex of a presentation realizing the deficiency is aspherical.
\end{prop}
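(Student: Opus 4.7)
The natural approach is to apply Atiyah's $\ell^2$-index formula to the presentation $2$-complex of a deficiency-realizing presentation. Fix a finite presentation $\langle x_1,\dots,x_g \mid r_1,\dots,r_k\rangle$ of $\Gamma$ with $g-k = \DEF(\Gamma)$, and form its presentation $2$-complex $X$: a finite CW-complex with one $0$-cell, $g$ $1$-cells, $k$ $2$-cells and $\pi_1(X) = \Gamma$. Its Euler characteristic is $\chi(X) = 1 - g + k = 1 - \DEF(\Gamma)$, and Atiyah's $\ell^2$-index theorem gives $\chi(X) = \beta_0^{(2)}(X) - \beta_1^{(2)}(X) + \beta_2^{(2)}(X)$.

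Next, I would relate the $\ell^2$-Betti numbers of $X$ to those of $\Gamma$. Since $\operatorname{B}\Gamma$ can be built from $X$ by attaching cells of dimension $\ge 3$, standard functoriality of $\ell^2$-Betti numbers (or equivalently the long exact sequence of the pair) yields $\beta_i^{(2)}(X) = \beta_i^{(2)}(\Gamma)$ for $i \le 1$ and $\beta_2^{(2)}(X) \ge \beta_2^{(2)}(\Gamma) \ge 0$. Assuming $\Gamma$ infinite (the finite case is immediate from $\DEF(\Gamma) \le 1 - 1/|\Gamma| \le 1$), one has $\beta_0^{(2)}(X) = 0$, hence
$$1 - \DEF(\Gamma) \,=\, -\beta_1^{(2)}(\Gamma) + \beta_2^{(2)}(X) \,\ge\, -\beta_1^{(2)}(\Gamma),$$
which is the desired inequality $(*)$ (writing $\ltb = \beta_1^{(2)}$), and equality holds precisely when $\beta_2^{(2)}(X) = 0$.

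For the asphericity claim in the equality case, I would argue that $\beta_2^{(2)}(X) = 0$ forces $\pi_2(X) = 0$. Since $X$ has no cells in dimension $\ge 3$, Hurewicz identifies $\pi_2(X)$ with $H_2(\tilde X) = \ker \partial_2$, a $\Z[\Gamma]$-submodule of the free module $C_2(\tilde X) = \Z[\Gamma]^k$. A non-zero element $z$ would generate a cyclic submodule $\Z[\Gamma] \cdot z$ whose $\ell^2$-closure is a non-zero, closed, $\Gamma$-invariant subspace sitting inside $\ker(\bar\partial_2) \subset \ell^2(\Gamma)^k$; any such non-zero Hilbert $\Gamma$-submodule carries a non-zero projection, and hence has strictly positive von Neumann dimension (since the trace on $\mathcal{N}(\Gamma)$ is faithful), contradicting $\beta_2^{(2)}(X) = \dim_{\mathcal{N}(\Gamma)} \ker(\bar\partial_2) = 0$. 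Hence $X$ is aspherical. The main obstacle is precisely this last step: extracting vanishing of an algebraic $\Z[\Gamma]$-module from the vanishing of a von Neumann dimension, which crucially uses faithfulness of the trace and the fact that the algebraic boundary and its $\ell^2$-extension agree on $\Z[\Gamma]^k \subset \ell^2(\Gamma)^k$. The comparison $\beta_i^{(2)}(X) = \beta_i^{(2)}(\Gamma)$ in low degrees, while routine, also relies on nontrivial input from L\"uck's dimension theory.
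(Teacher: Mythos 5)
The paper does not actually prove this proposition: it cites Hillman \cite{Hil--97} and remarks only that \eqref{(*)} is an instance of the $\ell^2$ Morse inequalities, which is precisely the route you take. Your argument is correct and is the standard one behind the cited result --- Atiyah's formula applied to the presentation $2$-complex $X$ together with $\beta_0^{(2)}(X)=0$ and $\beta_1^{(2)}(X)=\ltb(\Gamma)$ gives \eqref{(*)}, and in the equality case the vanishing of $\dim_{\mathcal{N}(\Gamma)}\ker\bar\partial_2$ forces $\pi_2(X)\cong H_2(\tilde X)=\ker\partial_2=0$ by faithfulness of the von Neumann trace, whence $X$ is aspherical.
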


The inequality~\eqref{(*)} has been part of the folklore for a long time, 
and is a special case of the Morse inequalities in the $\ell^2$ setting. 
For the case of equality we refer to the paper by Hillman~\cite{Hil--97}.

\begin{thm}
\label{t:def}
An infinite finitely presented group $\Gamma$ of positive deficiency
is presentable by a product if and only if it is 
either infinite cyclic or virtually $F_k \times \Z$ with $k\geq 1$. 
If one of these conditions holds, then the deficiency of $\Gamma$ is $1$.
\end{thm}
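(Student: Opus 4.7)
For the \emph{if} direction, check directly that $\Z$ and $F_k\times\Z$ are presentable by products (Example~\ref{basicexplspp}), a property that passes to any virtually $F_k\times\Z$ group by Example~\ref{basicexplspp}\ref{3DEbasicexplspp}. The deficiency equals $1$ by a two-sided estimate: the obvious presentation of $F_k\times\Z$ with $k+1$ generators and $k$ commutation relations gives $\DEF\geq 1$, while $\ltb=0$ for any group virtually $F_k\times\Z$ (Cheeger--Gromov applied to the infinite amenable normal subgroup $\Z$, combined with multiplicativity of $\ell^2$-Betti numbers under finite-index inclusion) together with Hillman's bound in Proposition~\ref{p:hill} force $\DEF\leq 1$.

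For the \emph{only if} direction, let $\Gamma$ be finitely presented of positive deficiency, presentable by a product via commuting infinite subgroups $\Gamma_1,\Gamma_2$ whose product has finite index in $\Gamma$. The crucial step is to establish $\ltb(\Gamma)=0$. By multiplicativity of $\ell^2$-Betti numbers under finite index it suffices to verify vanishing on $\Gamma_1\Gamma_2$, where both $\Gamma_i$ are normal. I would then split on the size of $\Gamma_1\cap\Gamma_2$: if it is infinite, it is an infinite central (hence amenable) normal subgroup of $\Gamma_1\Gamma_2$ and Cheeger--Gromov gives $\ltb=0$ directly; if it is finite, then the multiplication $\Gamma_1\times\Gamma_2\longrightarrow\Gamma_1\Gamma_2$ has finite central kernel, and the K\"unneth formula yields $\ltb(\Gamma_1\times\Gamma_2)=0$ (both factors being infinite), whence $\ltb(\Gamma_1\Gamma_2)=0$ by invariance of vanishing $\ell^2$-Betti numbers under quotients by a finite normal subgroup.

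Once $\ltb(\Gamma)=0$ is in hand, Proposition~\ref{p:hill} and the positivity hypothesis force $\DEF(\Gamma)=1$ with equality in~\eqref{(*)}, so the presentation $2$-complex realizing the deficiency is aspherical. Hence $\Gamma$ is torsion-free, $\cd(\Gamma)\leq 2$, and $\chi(\Gamma)=1-\DEF(\Gamma)=0$. Theorem~\ref{t:cd2} then says $\Gamma$ is virtually infinite cyclic or virtually $F_k\times F_\ell$ for some $k,\ell\geq 1$. A torsion-free virtually infinite cyclic group is $\Z$ itself. In the second case, multiplicativity of $\chi$ under finite-index inclusions gives $(1-k)(1-\ell)=\chi(F_k\times F_\ell)=[\Gamma:F_k\times F_\ell]\cdot\chi(\Gamma)=0$, forcing $k=1$ or $\ell=1$, so $\Gamma$ is virtually $F_k\times\Z$ with $k\geq 1$.

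The step I expect to be most delicate is the vanishing $\ltb(\Gamma)=0$ in the sub-case where $\Gamma_1\cap\Gamma_2$ is finite: Cheeger--Gromov does not apply directly, and one must appeal to the (standard but slightly technical) stability of vanishing $\ell^2$-Betti numbers under quotients by a finite normal subgroup, which follows from the $\ell^2$ Lyndon--Hochschild--Serre spectral sequence.
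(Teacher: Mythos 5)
Your argument is correct, and its skeleton --- establish $\ltb(\Gamma)=0$, apply Hillman's inequality and its equality case to get an aspherical presentation complex with $\cd(\Gamma)\le 2$ and $\Gamma$ torsion-free, then invoke Theorem~\ref{t:cd2} --- is the same as the paper's. You diverge in two places. First, the paper simply cites Proposition~4.2 of \cite{KoL--13} for the vanishing of $\ltb$ on groups presentable by products, whereas your case split on $\Gamma_1\cap\Gamma_2$ (Cheeger--Gromov for an infinite central subgroup versus K\"unneth plus a finite normal kernel) is in effect a reconstruction of that proposition's proof; it is sound provided you use L\"uck's extended $\ell^2$-Betti numbers, since the $\Gamma_i$ need not be finitely generated. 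Second, to exclude $F_k\times F_\ell$ with $k,\ell\ge 2$ and to bound the deficiency of virtually $F_k\times\Z$ groups from above, the paper stays with deficiencies: the Morse inequality gives $\DEF(F_k\times F_\ell)\le k+\ell-k\ell\le 0$, and a Reidemeister--Schreier count shows that a finite-index subgroup of a deficiency-one group again has deficiency at least one. You instead observe that equality in~\eqref{(*)} makes the presentation complex a finite $K(\Gamma,1)$ with $\chi(\Gamma)=1-\DEF(\Gamma)=0$, and use multiplicativity of $\chi$ under finite index to force $(1-k)(1-\ell)=0$; likewise you get $\DEF\le 1$ in the converse direction from $\ltb=0$ and Proposition~\ref{p:hill} rather than from the index computation. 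Your route is slightly cleaner at these points, at the cost of leaning on the $\ell^2$ and Euler-characteristic machinery where the paper's counting argument is elementary; the lower bound $\DEF(\Gamma)\ge 1$ in the ``if'' direction is in any case part of the standing hypothesis, so nothing is lost there.
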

\begin{proof}
If $\Gamma$ is presentable by a product, 
then $\ltb (\Gamma)=0$ by \cite[Proposition~4.2]{KoL--13}. 
It follows that the  deficiency is at most $1$, 
and since we assume 
positive deficiency and presentability by products, 
we only have to consider groups $\Gamma$ of deficiency $=1$ with equality in~\eqref{(*)}. 
The characterization of the  equality case by Hillman~\cite{Hil--97} 
tells us that the cohomological dimension of $\Gamma$ is at most $2$, and so Theorem~\ref{t:cd2} is applicable.
Moreover, $\Gamma$ must be torsion-free.

If $\Gamma$ is virtually infinite cyclic, then, since it is torsion-free, it is itself infinite cyclic.
If $\Gamma$ is not virtually infinite cyclic, 
then by Theorem~\ref{t:cd2} it is virtually $F_k\times F_\ell$ with both $k, \ell \geq 1$.
If both $k, \ell \geq 2$ then the Morse inequality and the K\"unneth formula give us
\begin{equation*}
\DEF (F_k \times F_\ell) \leq b_1(F_k\times F_\ell) - b_2(F_k \times F_\ell) 
\, = \, k + \ell - k  \ell \leq 0 
\end{equation*}
(where $b_r(-) := \dim (H_r(-, \Q)$ denotes a Betti number).
Thus $\Gamma$ has a finite index subgroup of non-positive deficiency. 
This contradicts $\DEF (\Gamma)=1$,
since if $\Gamma$ has a presentation with $a$ generators and $b$ relations, 
then, by the Reidemeister--Schreier process, 
a subgroup $\bar\Gamma\subset\Gamma$ of index $d$ 
has a presentation with $\bar a = (a-1)d+1$ generators 
and $\bar b = bd$ relations. 
Therefore
\begin{equation*}
\DEF (\bar\Gamma )\geq \bar a-\bar b = (a-b-1)d+1\geq 1
\end{equation*}
because we may take $a-b=1$. 
Thus we conclude that $\Gamma$ is virtually $F_k\times\Z$ with $k\geq 1$.

Conversely, both $\Z$ and every group $\Gamma$ that is virtually $F_k\times\Z$ 
with $k\geq 1$ are presentable by products. 
Moreover, it is easy to see that $\Z$ and $F_k\times\Z$ with $k\geq 1$ are both of deficiency $=1$.
By the above argument, every group that is virtually $F_k\times\Z$ has deficiency at most~$1$.
\end{proof}

\begin{rem}
As pointed out to us by the anonymous referee, the groups appearing in the conclusion of Theorem~\ref{t:def},
i.e.~the groups that are torsion-free and virtually $F_k\times\Z$ for some $k$, are well understood from 
several points of view; see for example~\cite{Kro--90,Lev--15} and the references given there. They are 
the so-called unimodular generalized Baumslag-Solitar groups. We will discuss the special case of 
Baumslag-Solitar groups (rather than generalized ones) in Subsection~\ref{subsectionBS} below.
\end{rem}

\section{Zariski dense subgroups of algebraic groups}
\label{sectionZariski}

When a group $\Gamma$ is naturally a subgroup of a larger group $G$,
e.g.\ a Lie group or an algebraic group of some sort, 
properties of $\Gamma$ can often be deduced from properties of $G$.
Theorems \ref{inGalgnotproduct} and \ref{irrZdensenot} below 
are illustrations of this for presentability by products.
Before stating them, we need some preliminaries.

Let $k$ be a field, given as a subfield of an algebraically closed field $K$.
Let $\mathfrak g_K$ be a Lie algebra over $K$.
 A $k$-form of $\mathfrak g_K$
is a Lie algebra $\mathfrak g_k$ over $k$ 
given together with an isomorphism 
$\mathfrak g_k \otimes_k K \overset{\simeq}{\longrightarrow} \mathfrak g_K$.
A $k$-Lie algebra is a Lie algebra over $K$
given together with a $k$-form.
Let $\G$ be a linear algebraic $k$-group.
We denote by $\G (k)$
the group of $k$-points of $\G$,
and by $\mathfrak g$ its Lie algebra, viewed as a $k$-Lie algebra.

\begin{defn}
\label{almostdirectproduct}
(1)
A connected linear algebraic $k$-group $\G$
is \textbf{presentable by a product} if there exist
two commuting connected closed $k$-subgroups 
$\G_1, \G_2$ of $\G$ of positive dimensions
such that the multiplication homomorphism
$\mu : \G_1 \times \G_2 \longrightarrow \G$, $(g_1, g_2) \longmapsto g_1g_2$
is onto.

Note that if $\G_1, \G_2$ are as above, 
then they are normal subgroups of $\G$, both non-trivial,
and each of them is contained in the centralizer of the other.

(2)
A $k$-Lie algebra $\mathfrak g$ is \textbf{presentable by a product} if there exist
two commuting subalgebras
$\mathfrak g_1, \mathfrak g_2$ of $\mathfrak g$ of positive dimensions
such that the homomorphism $\mathfrak g_1 \times \mathfrak g_2 \longrightarrow \mathfrak g$,
$(X_1 , X_2) \longmapsto X_1 + X_2$ is onto.

Note that if $\mathfrak g_1, \mathfrak g_2$ are as above, 
then they are non-zero ideals in $\mathfrak g$,
and each one is contained in the centralizer in $\mathfrak g$ of the other.
\end{defn}

From now on, we will systematically assume that the characteristic
of the ground field is zero. 
This has useful consequences for our purpose:

\begin{thm}
\label{motivchar0}
Let $k$ be a field of characterisitic zero,
$\G$ a connected linear algebraic $k$-group,
and $\mathfrak g$ its Lie algebra.
\begin{enumerate}[noitemsep,label=(\arabic*)]
\item\label{1DEmotivchar0}
$\G (k)$ is infinite if and only if the dimension of $\G$ is positive.
\item\label{2DEmotivchar0}
If $\mathfrak g$ is not presentable by a product,
then $\G$ is not presentable by a product.
\end{enumerate}
\end{thm}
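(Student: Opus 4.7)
The plan is to prove the two parts separately. Part~\ref{1DEmotivchar0} is almost formal given the right density theorem, and Part~\ref{2DEmotivchar0} I would prove by contrapositive, passing from a product presentation of $\G$ to one of $\mathfrak{g}$ via the differential of the multiplication map at the identity.

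For Part~\ref{1DEmotivchar0}, the ``only if'' direction is immediate: if $\dim \G = 0$ then, being connected, $\G$ is a single (reduced) point, so $\G(k) = \{1\}$. For the ``if'' direction I would invoke the classical fact that in characteristic zero every connected linear algebraic $k$-group is unirational over $k$ (Rosenlicht; see e.g.\ Borel, \emph{Linear Algebraic Groups}, 18.2). Since $k$ is infinite, unirationality forces $\G(k)$ to be Zariski dense in $\G$, and any Zariski dense subset of a positive-dimensional variety is infinite.

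For Part~\ref{2DEmotivchar0}, suppose $\G$ is presentable by a product, so that there exist commuting connected closed $k$-subgroups $\G_1, \G_2 \subset \G$ of positive dimension with surjective multiplication map $\mu \colon \G_1 \times \G_2 \longrightarrow \G$. Let $\mathfrak{g}_i$ be the Lie algebra of $\G_i$, regarded as a $k$-subalgebra of $\mathfrak{g}$; then $\dim_k \mathfrak{g}_i = \dim \G_i > 0$. Since $\G_1$ and $\G_2$ commute element-wise, standard functoriality of the Lie bracket gives $[\mathfrak{g}_1, \mathfrak{g}_2] = 0$, so each $\mathfrak{g}_i$ is contained in the centralizer of the other. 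The differential of $\mu$ at the identity is the addition map $(X_1,X_2) \longmapsto X_1 + X_2 \colon \mathfrak{g}_1 \oplus \mathfrak{g}_2 \longrightarrow \mathfrak{g}$, so it suffices to show this is surjective. Because $\mu$ is a surjective homomorphism of algebraic groups and the characteristic is zero, $\mu$ is separable, hence smooth at the identity, so $d\mu$ is surjective. It then follows from Definition~\ref{almostdirectproduct}(2) that $\mathfrak{g}$ is presentable by a product.

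The main obstacle, and the reason the characteristic zero assumption is essential, is the surjectivity of $d\mu$: in positive characteristic the multiplication map could fail to be separable, so even though $\mu$ is surjective on groups, $d\mu$ need not be surjective and the passage from $\G$ to $\mathfrak{g}$ breaks down. All other ingredients (dimensions of Lie algebras, vanishing of brackets for commuting subgroups, the trivial direction of Part~\ref{1DEmotivchar0}) are routine consequences of standard algebraic-group formalism.
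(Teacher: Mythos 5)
Your proposal is correct and follows essentially the same route as the paper: part~(1) is the standard density fact (the paper simply cites Springer, Corollary 13.3.10, where you unwind the usual proof via unirationality of connected groups in characteristic zero), and part~(2) is proved by the same contrapositive, passing to the differential of the multiplication map $\mu$ at the identity. You usefully make explicit the one point the paper leaves implicit, namely that surjectivity of $d\mu$ follows from separability of the surjective homomorphism $\mu$ in characteristic zero.
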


\begin{proof}
For \ref{1DEmotivchar0}, 
which holds more generally over perfect fields,
see e.g.\ \cite[Corollary 13.3.10]{Spr--09}.
Note that there are counter-examples in (non-perfect) fields of positive characteristic
\cite[Th.~3.1 of Chap.~6, Page 65]{Oes--84}.

For \ref{2DEmotivchar0}, we check the contraposition.
Assume that there exist  $\G_1, \G_2$ and $\mu$
as in Definition \ref{almostdirectproduct}.
For $j \in \{1, 2\}$, denote by $\mathfrak g_j$ the Lie algebra of $\G_j$;
since $\G_j$ is normal in $\G$, it is a Lie ideal in $\mathfrak g$.
The differential 
$\mathfrak g_1 \times \mathfrak g_2 \longrightarrow \mathfrak g$
of $\mu$ at the identity
is a presentation of $\mathfrak g$ by a product.
\end{proof}

The following theorem is a variation on \cite[Proposition 3]{CoH--07}.

\begin{thm}
\label{inGalgnotproduct}
Let $k$ be a field of characteristic zero,
and $\G$ an algebraic $k$-group of positive dimension. 
Assume that $\G$ is connected,
and is not presentable by a product.
Then every Zariski dense subgroup of $\G(k)$ is not presentable by a product.
\end{thm}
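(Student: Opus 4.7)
The plan is to argue by contraposition: assuming that some Zariski dense subgroup $\Gamma \subset \G(k)$ is presentable by a product, I will exhibit a presentation of $\G$ itself by a product, contradicting the hypothesis on $\G$. So let $\Gamma_1, \Gamma_2 \subset \Gamma$ be commuting infinite subgroups with $\Gamma_1 \Gamma_2$ of finite index in $\Gamma$.

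The first step is to pass to Zariski closures in $\G$. Write $\mathbf{H}_i := \overline{\Gamma_i}$ for the Zariski closure; these are closed $k$-subgroups, and because the commutator map is a morphism of varieties, the relation $[\Gamma_1,\Gamma_2] = \{1\}$ propagates to $[\mathbf{H}_1,\mathbf{H}_2] = \{1\}$. Let $\mathbf{H}_i^0$ be the identity component; in characteristic zero this is still defined over $k$, and the two identity components still commute. The finite-index subgroup $\Gamma_i \cap \mathbf{H}_i^0$ of $\Gamma_i$ is still infinite, so $\mathbf{H}_i^0(k)$ is infinite, and Theorem~\ref{motivchar0}\ref{1DEmotivchar0} then gives $\dim \mathbf{H}_i^0 > 0$.

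The second step is to show $\mathbf{H}_1^0 \mathbf{H}_2^0 = \G$. Since $\mathbf{H}_1$ and $\mathbf{H}_2$ commute, the product $\mathbf{H}_1 \mathbf{H}_2$ is the image of the group homomorphism $\mathbf{H}_1 \times \mathbf{H}_2 \to \G$, hence a closed subgroup of $\G$. The subgroup $\Gamma_1 \Gamma_2 \subset \mathbf{H}_1 \mathbf{H}_2$ has finite index in $\Gamma$, so $\Gamma$ lies in finitely many cosets of $\mathbf{H}_1 \mathbf{H}_2$; this finite union of cosets is Zariski closed, and by Zariski density of $\Gamma$ it must coincide with $\G$. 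Hence $[\G : \mathbf{H}_1 \mathbf{H}_2]$ is finite, and connectedness of $\G$ forces $\mathbf{H}_1 \mathbf{H}_2 = \G$. The same connectedness argument, applied to the connected subgroup $\mathbf{H}_1^0 \mathbf{H}_2^0$ (which is of finite index in $\mathbf{H}_1 \mathbf{H}_2$), yields $\mathbf{H}_1^0 \mathbf{H}_2^0 = \G$, giving the desired presentation of $\G$ by a product and completing the contradiction.

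The essential content of the argument is the use of Zariski density to propagate the finite-index surjectivity condition from $\Gamma$ up to $\G$; the commutation condition propagates for free by continuity of the commutator morphism, and positive-dimensionality of the factors follows from Theorem~\ref{motivchar0}\ref{1DEmotivchar0}. I do not anticipate a serious obstacle: the two technical points to handle carefully are the closedness of the product of two commuting algebraic subgroups and the fact that identity components of $k$-subgroups are themselves defined over $k$ in characteristic zero, both of which are standard facts about linear algebraic groups.
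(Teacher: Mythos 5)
Your argument is correct and follows essentially the same route as the paper: the paper isolates the Zariski-closure step as Lemma~\ref{LemmaG=G1G2} (commutation propagates to closures by \cite[2.4]{Bor--91}, the product of the closures is closed by \cite[1.4(a)]{Bor--91} and equals $\G$ by density and connectedness, and one then passes to identity components), and deduces positive dimensionality of the factors from infiniteness exactly as you do. No substantive differences.
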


In the situation of Theorem \ref{inGalgnotproduct},
observe that $\G(k)$ is Zariski-dense in itself, and therefore
is not presentable by a product as an abstract group.

Since Theorem \ref{inGalgnotproduct} applies to algebraic $k$-groups
of which the Lie algebras are not presentable by a product,
we indicate first some Lie algebras of this kind.
If $\mathfrak a$ is an ideal in a Lie algebra $\mathfrak g$,
we denote by $\mathfrak z_{\mathfrak g}(\mathfrak a)$
its centralizer $\{Y \in \mathfrak g \mid [Y,X] = 0 \hskip.2cm \forall X \in \mathfrak a \}$.

\begin{exe}
\label{exirrLiealg}
Let $k$ be a field of characteristic $0$.
Examples of Lie algebras over $k$ which \emph{are not} 
presentable by a product include:
\begin{enumerate}[noitemsep,label=(\arabic*)]
\item\label{1DEexirrLiealg}
The non-abelian soluble Lie algebra $\mathfrak{af}$ of dimension $2$, 
with basis $\{e,g\}$ such that $[g,e] = e$, 
also called the Lie algebra of the ``$ax+b$ group'',
or affine group of the line.\\
Indeed, the only non-zero ideals of $\mathfrak{af}$ are $ke$ and $\mathfrak{af}$ itself,
and their centralizers in $\mathfrak{af}$ are
$\mathfrak z_{\mathfrak{af}}(ke) = ke$ and $\mathfrak z_{\mathfrak{af}}(\mathfrak{af}) = \{0\}$.
It follows that there cannot exist two subalgebras of $\mathfrak{af}$
as in Definition \ref{almostdirectproduct}.
\item\label{2DEexirrLiealg}
The $3$-dimensional Lie algebra $\mathfrak{sol}$,
with basis $\{e,f,g\}$ such that $[g,e] = e$, $[g,f] = -f$, $[e,f] = 0$,
i.e.\ the Lie algebra of the group $\mathbf{Sol}(k)$,
consisting of matrices of the form
$\begin{pmatrix}
a & 0 & b
\\
0 & a^{-1} & c
\\ 
0 & 0 & 1
\end{pmatrix}$
with $a \in k^\times$ and $b,c \in k$.\\
Indeed, it is easy to check that the complete list of non-zero ideals of $\mathfrak{sol}$
contains exactly four items:
$\mathfrak{sol}$ itself, 
the two-dimensional derived algebra $[\mathfrak{sol},\mathfrak{sol}] = ke \oplus kf$
(which is maximal abelian),
and the two one-dimensional ideals $ke$ and $kf$.
Their centralizers in $\mathfrak{sol}$ are $\{0\}$ for the first
and $[\mathfrak{sol},\mathfrak{sol}]$ for the three others.
It follows again that there cannot exist two subalgebras of $\mathfrak{sol}$
as in Definition \ref{almostdirectproduct}.
\item\label{3DEexirrLiealg}
Simple Lie algebras.
\item\label{4DEexirrLiealg}
Semi-direct products $V^r \rtimes \mathfrak{g}$,
where $V$ is a finite dimensional $k$-vector space,
$\mathfrak g$ a simple Lie subalgebra of $\mathfrak{gl}(V)$
acting irreducibly on $V$, and $r$ a positive integer;
the space  $V^r = V \oplus \cdots \oplus V$ (with $r$ factors) 
is considered as an abelian Lie algebra
on which $\mathfrak g$ acts by the restriction of the natural diagonal action
of  $\mathfrak{gl}(V)$ on $V^r$.
\\
Indeed, every non-zero ideal of $V^r \rtimes \mathfrak g$
is either the full Lie algebra, with centre $\{0\}$, 
or a $ \mathfrak g$-invariant subspace of $V^r$, with centralizer $V^r$.
The conclusion follows as in \ref{1DEexirrLiealg} and \ref{2DEexirrLiealg} above.
A particular case of this example appears in Proposition \ref{ofBnotpbp}.
\end{enumerate}
\end{exe}

By Theorems \ref{motivchar0} and \ref{inGalgnotproduct},
the previous examples for Lie algebras provide the following examples for groups.

\begin{exe}
\label{exnppfirst}
The following groups are not presentable by products.
\begin{enumerate}[noitemsep,label=(\arabic*)]
\item\label{1DEexnppfirst}
Let 
$G = 
\begin{pmatrix}
\mathbf R^\times & \mathbf R 
\\
0 & 1 
\end{pmatrix}$
be the real affine group,
or more precisely the group of real points of the appropriate algebraic group.
Observe that $G$ is centreless, of dimension $2$.
As an algebraic group, it is connected, even if, as a real Lie group, 
its connected component
$ 
\begin{pmatrix}
\mathbf R_+^\times & \mathbf R 
\\
0 & 1 
\end{pmatrix}$
is of index two.
The group $G$ is not presentable by a product, 
by Example \ref{exirrLiealg}\ref{1DEexirrLiealg}.
\par

For every $n \in \Z$ with $\vert n \vert \ge 2$, the soluble Baumslag-Solitar group
$\BS(1,n) = \langle a,t \mid tat^{-1} = a^n \rangle$
is isomorphic to a Zariski dense subgroup of $G$:
\begin{equation*}
\BS(1,n) \, \approx \, 
\begin{pmatrix}
\mathbf n^{\mathbf Z} & \mathbf Z [1/n] 
\\
0 & 1 
\end{pmatrix} \, \subset \,
\begin{pmatrix}
\mathbf R^\times & \mathbf R 
\\
0 & 1 
\end{pmatrix} \, = \,
G .
\end{equation*}
It follows 
that $\BS(1,n)$ is not presentable by a product.
For all Baumslag-Solitar groups (soluble or not), 
see Theorem~\ref{BS} below.
\item\label{2DEexnppfirst}
Every Zariski dense subgroup 
in the $3$-dimensional solvable Lie group $\Sol = \mathbf{Sol}(\R)$
is not presentable by a product,
by Example \ref{exirrLiealg}\ref{2DEexirrLiealg}.
This recovers the case of lattices proved in Corollary~3.7 of~\cite{KoL--13}. 
Lattices are Zariski dense by  a version of the Borel density theorem 
proved for example as Corollary 1.2 in \cite{Sha--99}.
\item\label{3DEexnppfirst}
For every $n \ge 2$, the lattices
$\SL_n(\Z)$ and $\Sp_n(\Z)$ 
are Zariski dense in $\SL_n(\R)$ and $\Sp_n(\R)$ respectively,
and consequently are not presentable by products
by Example \ref{exirrLiealg}\ref{3DEexirrLiealg}.
\par
This carries over to every subgroup of $\SL_n(\R)$ containing $\SL_n(\Z)$,
such as $\SL_n(\Z [1/d])$, for an integer $d \ge 2$,
or $\SL_n(\Z[\sqrt d ])$, for a squarefree integer $d \ge 2$;
and similarly for subgroups of $\Sp_n(\R)$ containing $\Sp_n(\Z)$.
\par
The same holds for non-elementary Fuchsian groups,
which are Zariski dense in  $\operatorname{PSL}_2(\mathbf R)$,
and non-elementary Kleinian groups,
which are Zariski dense in  $\operatorname{PSL}_2(\mathbf C)$.
\item\label{4DEexnppfirst}
For every $n \ge 2$ and $r \ge 1$, the group
$(\Z^n)^r \rtimes \SL_n(\Z)$ is Zariski-dense in $(\R^n)^r \rtimes \SL_n(\R)$,
and consequently is not presentable by a product
by Example \ref{exirrLiealg}\ref{4DEexirrLiealg}.
Here $\SL_n(\R)$ acts naturally on $\R^n$, and diagonally on $(\R^n)^r$.
\end{enumerate}
\end{exe}

Conclusions similar to those of Theorem \ref{inGalgnotproduct} carry over to some situations
where $\G$ is an almost direct product, see Theorem \ref{irrZdensenot} below.
First we recall some background; 
see e.g.\ in \cite{BoT--65}, 
Items 0.6-0.7 for \ref{1DEbackground} to \ref{3DEbackground},
and 2.15 for \ref{5DEbackground} (and therefore \ref{4DEbackground}).

\begin{back}
\label{background}
Let $k$ be a field of charactersitic $0$
and $\G$ a linear algebraic $k$-group.
Assume that $\G$ is connected of positive dimension.
\begin{enumerate}[noitemsep,label=(\arabic*)]
\item\label{1DEbackground}
Let $\G_1, \hdots, \G_r$ be closed normal $k$-subgroups of $\G$.
Then $\G$ is the \textbf{almost direct product} of $\G_1, \hdots, \G_r$
if the product map $\G_1 \times \cdots \times \G_r \longrightarrow \G$ 
is surjective with finite kernel.
The group $\G$ is a \textbf{non-trivial almost direct product}
if this happens for some $r$-uple of closed normal $k$-subgroups of positive dimensions, with $r \ge 2$.
\item\label{2DEbackground}
The group $\G$ is \textbf{semisimple} if it does not contain any
non-trivial connected solvable normal subgroup.
\item\label{3DEbackground}
The group $\G$ is \textbf{almost $k$-simple} if
every closed normal subgroup of $\G$ distinct from $\G$ is finite.
\end{enumerate}
Assume moreover from now on that $\G$ is semisimple.
\begin{enumerate}[noitemsep,label=(\arabic*)]
\addtocounter{enumi}{3}
\item\label{4DEbackground}
The group $\G$ is the almost direct product of its 
minimal closed connected normal $k$-subgroups of positive dimensions.
These are called the \textbf{almost simple factors} of $\G$.
\item\label{5DEbackground}
More generally, let $H$ be a closed connected normal subgroup of $\G(k)$.
Let $\G_1, \hdots, \G_s$ be the almost simple factors of $\G$
such that $\G_i(k) \subset H$ for $i = 1, \hdots, s$.
Then $H$ is the group of $k$-points of a connected $k$-subgroup $\mathbf H$ of $\G$
which is the almost direct product of $\G_1, \hdots, \G_s$.
\end{enumerate}
\end{back}

The notion of irreducibility is standard for lattices in semisimple connected groups,
see e.g.\ \cite[5.20 \& 5.21]{Rag--72} and \cite[Page 133]{Mar--91}.
For Zariski dense subgroups of algebraic groups, it can be extended as follows:

\begin{defn}
\label{defirr}
Let $k$ be a field of characteristic $0$
and $\G$  a semisimple connected algebraic $k$-group of positive dimension.
A Zariski dense subgroup $\Gamma$ of $\G (k)$
is \textbf{reducible} if there exist normal $k$-subgroups
$\G_1, \G_2$ of $\G$ of positive dimensions
such that
\begin{enumerate}[noitemsep,label=(\arabic*)]
\item\label{1DEdefirr}
$\G_1(k) \G_2(k) = \G(k)$,
\item\label{2DEdefirr}
$\G_1(k) \cap \G_2(k)$ is finite,
\item\label{3DEdefirr}
the subgroup $(\Gamma \cap \G_1(k))(\Gamma \cap \G_2(k))$ is of finite index in $\Gamma$;
\end{enumerate}
and $\Gamma$ is \textbf{irreducible} otherwise.
\end{defn}

The next theorem generalizes Proposition 8.1 of \cite{KoL--13}.

\begin{thm}
\label{irrZdensenot}
If $k$ is a field of characteristic zero,
and $\G$ a semisimple connected algebraic $k$-group of positive dimension,
then every  irreducible Zariski dense subgroup  of $\G(k)$ is not presentable by a product.
\end{thm}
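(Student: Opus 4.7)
The plan is to argue by contraposition: assuming $\Gamma$ is presentable by a product, I will produce normal $k$-subgroups $\G_1,\G_2 \triangleleft \G$ that witness the reducibility of $\Gamma$ in the sense of Definition~\ref{defirr}. So let $\Gamma_1,\Gamma_2 \le \Gamma$ be commuting infinite subgroups whose product $\Gamma_1\Gamma_2$ has finite index in $\Gamma$, and form the Zariski closures $\mathbf H_i := \overline{\Gamma_i}^{\,\mathrm Z}$ in $\G$; because $\mathrm{char}(k)=0$ these are closed $k$-subgroups. The centralizer $C_\G(\Gamma_2)$ is Zariski closed and coincides with $C_\G(\mathbf H_2)$, so from $\Gamma_1 \subseteq C_\G(\Gamma_2)$ one gets $\mathbf H_1 \subseteq C_\G(\mathbf H_2)$; thus $\mathbf H_1$ and $\mathbf H_2$ commute. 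Each has positive dimension by Theorem~\ref{motivchar0}\ref{1DEmotivchar0}, since the groups $\Gamma_i$ are infinite.

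Since $\mathbf H_1$ and $\mathbf H_2$ commute, $\mathbf H_1\mathbf H_2$ is the image of the group homomorphism $\mathbf H_1\times\mathbf H_2\to\G$, hence a closed $k$-subgroup. It contains $\Gamma_1\Gamma_2$, whose Zariski closure agrees with that of $\Gamma$ (a finite-index subgroup of a group whose closure is the connected $\G$), so $\mathbf H_1\mathbf H_2=\G$. Each $\mathbf H_i$ is normalized by $\mathbf H_{3-i}$ and by itself, hence by $\mathbf H_1\mathbf H_2=\G$, so is normal in $\G$. Passing to identity components $\G_i := \mathbf H_i^{\circ}$ gives closed, connected, normal $k$-subgroups of positive dimension, and $\G_1\G_2$ is a closed connected subgroup of finite index in the connected group $\G$, hence $\G_1\G_2=\G$.

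Semisimplicity enters twice. First, the radical of $\G_i$ is characteristic in $\G_i$, hence normal in $\G$, hence trivial; each $\G_i$ is therefore semisimple. Second, $\G_1\cap\G_2$ centralizes $\G_1\G_2=\G$, so lies in the finite centre of the semisimple group $\G$; this gives condition~(2) of Definition~\ref{defirr}. Condition~(3) is immediate: $\mathbf H_i/\G_i$ is finite, so $\Gamma_i\cap\G_i(k)$ has finite index in $\Gamma_i$, whence $(\Gamma\cap\G_1(k))(\Gamma\cap\G_2(k))$ contains a finite-index subgroup of $\Gamma_1\Gamma_2$ and so has finite index in $\Gamma$.

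The main obstacle is condition~(1), the equality $\G_1(k)\G_2(k)=\G(k)$ on $k$-points, whereas my construction yields only the algebraic equality $\G_1\cdot\G_2=\G$ (i.e.\ on $\bar k$-points). This is a descent problem governed by the Galois cohomology of the finite central kernel of the isogeny $\G_1\times\G_2\to\G$. I would handle it by exploiting the Lie-algebra decomposition $\mathrm{Lie}(\G)=\mathrm{Lie}(\G_1)\oplus\mathrm{Lie}(\G_2)$, which is actually a direct sum of ideals (an abelian ideal of a semisimple Lie algebra is zero), corresponding via Background~\ref{background}\ref{4DEbackground} to a partition of the almost simple factors of $\G$ into those contained in $\G_1$ and those contained in $\G_2$; this structural information is what should allow the passage from the algebraic product decomposition to the $k$-point version of~(1).
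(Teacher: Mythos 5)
Your argument follows the same route as the paper's proof: argue by contraposition, take the Zariski closures of the images of the two commuting factors, pass to identity components, and check that the resulting connected normal $k$-subgroups $\G_1,\G_2$ witness reducibility. Everything up to and including conditions \ref{2DEdefirr} and \ref{3DEdefirr} of Definition~\ref{defirr} is correct and reproduces the content of Lemma~\ref{LemmaG=G1G2} together with the finite-index argument at the end of the paper's proof; your identification of $\G_1,\G_2$ with complementary products of almost simple factors is exactly the paper's appeal to Background~\ref{background}\ref{5DEbackground}.

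The gap is precisely where you place it, namely condition \ref{1DEdefirr}, and the repair you sketch cannot work. Knowing that $\G_1\times\G_2\to\G$ is a central $k$-isogeny realizing the partition of the almost simple factors does \emph{not} give $\G_1(k)\G_2(k)=\G(k)$: the cokernel of $\G_1(k)\times\G_2(k)\to\G(k)$ injects into $H^1(k,\G_1\cap\G_2)$, which is nonzero in examples. Concretely, take $\G=(\SL_2\times\SL_2)/\Delta\mu_2$ over $k=\Q$ with $\G_1,\G_2$ the images of the two $\SL_2$ factors; for $g\in\GL_2(\Q)$ with $\det g=d$ a non-square, the class of $(g/\sqrt{d},\,g/\sqrt{d})$ is a $\Q$-point of $\G$ that does not lie in $\G_1(\Q)\G_2(\Q)$. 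So no amount of structure theory of the almost simple factors yields the literal equality in \ref{1DEdefirr}, and this step cannot be completed as you propose. It is only fair to add that the paper's own proof never addresses \ref{1DEdefirr} either: it verifies condition \ref{3DEdefirr} and concludes reducibility outright, in effect treating \ref{3DEdefirr} as the operative condition. If Definition~\ref{defirr} is read with \ref{1DEdefirr} replaced by the algebraic equality $\G_1\G_2=\G$ --- which is what Lemma~\ref{LemmaG=G1G2}, and equally your construction, actually delivers --- then your argument is complete; with the definition taken literally, the equality of $k$-points is simply false in general and should not be pursued.
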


\begin{lem}
\label{LemmaG=G1G2}
Let $k$ be a field of chracteristic zero, 
$\G$ a connected algebraic $k$-group of positive dimension,
and $\Gamma$ a Zariski dense subgroup of $\G (k)$.
Let $\Gamma_1, \Gamma_2$ be two groups and 
$f : \Gamma_1 \times \Gamma_2 \longrightarrow \Gamma$
a homomorphism with image of finite index.
Denote by $\G_1$ the Zariski closure of $f(\Gamma_1 \times \{1\})$,
by $\G_2$ that of $f(\{1\} \times \Gamma_2)$, 
and by $\G_1^0, \G_2^0$ their connected components;
note they are $k$-subgroups of $\G$.

Then $\G_1^0$, $\G_2^0$ are closed normal $k$-subgroups of $\G$,
each one in the centralizer of the other,
and $\G_1^0 \G_2^0 = \G$.
\end{lem}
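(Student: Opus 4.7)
\medskip

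\noindent\emph{Proof plan.} The plan is to establish three points in order: (i) $\G_1$ and $\G_2$ commute elementwise, which gives commutation of the identity components and makes $\G_1\G_2$ a subgroup; (ii) $\G_1^0\G_2^0=\G$, using that the image of $f$ has finite index in the Zariski dense subgroup $\Gamma$ and that $\G$ is connected; (iii) normality of $\G_1^0$ and $\G_2^0$ in $\G$, using that each is characteristic in the respective Zariski closure and is centralized by the other factor. The $k$-rationality of every object in sight is automatic: since $k$ has characteristic zero, it is perfect, so the Zariski closure in $\G$ of any subset of $\G(k)$ is defined over $k$, and the identity component of a $k$-subgroup is again a $k$-subgroup.

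\smallskip

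For step (i) I would use the standard fact that for any subset $S\subseteq\G$ the centralizer $Z_\G(S)$ is Zariski closed. The sets $f(\Gamma_1\times\{1\})$ and $f(\{1\}\times\Gamma_2)$ commute elementwise in $\Gamma$, so $f(\Gamma_1\times\{1\})\subseteq Z_\G(f(\{1\}\times\Gamma_2))$; taking closures gives $\G_1\subseteq Z_\G(f(\{1\}\times\Gamma_2))$, and then applying the same observation to each $g_1\in\G_1$ yields $\G_2\subseteq Z_\G(\G_1)$. So $\G_1$ and $\G_2$ commute elementwise, and in particular so do $\G_1^0$ and $\G_2^0$. It follows that the set-theoretic product $\G_1^0\G_2^0$ is a subgroup of $\G$, realized as the image of the morphism of $k$-varieties $\G_1^0\times\G_2^0\longrightarrow\G$, $(g_1,g_2)\mapsto g_1g_2$; being the image of a morphism it is constructible, and every constructible subgroup of an algebraic group is closed. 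The same argument applies to $\G_1\G_2$.

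\smallskip

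For step (ii), since $f(\Gamma_1\times\Gamma_2)$ has finite index in $\Gamma$, its Zariski closure is a closed subgroup of finite index in the Zariski closure of $\Gamma$, namely $\G$. Because $\G$ is connected, a closed subgroup of finite index in $\G$ must equal $\G$, so $f(\Gamma_1\times\Gamma_2)$ is Zariski dense in $\G$. Since $f(\Gamma_1\times\Gamma_2)=f(\Gamma_1\times\{1\})\,f(\{1\}\times\Gamma_2)\subseteq\G_1\G_2$ and $\G_1\G_2$ is closed, we get $\G_1\G_2=\G$. The subgroup $\G_1^0\G_2^0$ is closed (step (i)), connected as the image of the connected group $\G_1^0\times\G_2^0$, and of finite index in $\G_1\G_2=\G$ because $\G_i^0$ has finite index in $\G_i$. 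A closed connected subgroup of finite index in the connected group $\G$ must equal $\G$, so $\G_1^0\G_2^0=\G$.

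\smallskip

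For step (iii), $\G_1^0$ is normal in $\G_1$ since the identity component is characteristic. By step (i), $\G_2$ centralizes $\G_1\supseteq\G_1^0$ and in particular normalizes $\G_1^0$. Thus the subgroup $\G_1\G_2$ normalizes $\G_1^0$; but $\G_1\G_2=\G$, so $\G_1^0$ is normal in $\G$, and symmetrically for $\G_2^0$. The only technical point that needs care is the claim that $\G_1\G_2$ and $\G_1^0\G_2^0$ are closed; I do not expect a serious obstacle here, as it is the standard ``constructible subgroup is closed'' lemma applied to the image of the product morphism, which is well-defined because the two factors commute.
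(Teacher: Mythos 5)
Your proof is correct and follows essentially the same route as the paper's: commutation of $\G_1$ and $\G_2$ by passing to Zariski closures, closedness of the product, density of $f(\Gamma_1\times\Gamma_2)$ in the connected group $\G$ to get $\G_1\G_2=\G$, and normality of the identity components because they are characteristic and centralized by the other factor. The only differences are which standard facts you invoke (closedness of centralizers and of constructible subgroups, versus the paper's citations of Borel on images of morphisms and closures of commutators), and these are immaterial.
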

\begin{proof}
On the one hand,  
$\G_1 \G_2$ is closed in $\G$,
as the image of the morphism
\begin{equation*}
\mu \, : \, \G_1 \times \G_2 \longrightarrow \G, \hskip.2cm (g_1, g_2) \longmapsto g_1g_2 ,
\end{equation*}
see \cite[1.4(a) Page 47]{Bor--91}.
On the other hand, 
\begin{equation*}
[\G_1, \G_2] \, = \,  \{1\} , 
\end{equation*}
because 
\begin{equation*}
\left[ \hskip.1cm \overline{ f(\Gamma_1 \times \{1\})} ,   
\hskip.1cm  \overline{f(\{1\} \times \Gamma_2)}  \hskip.1cm \right] \, = \, 
\overline{  [f(\Gamma_1 \times \{1\}) , f(\{1\} \times \Gamma_2)] } \, = \,  \{1\} ,
\end{equation*} 
see \cite[2.4 Page 59]{Bor--91};
in particular, $\G_2$ [respectively $\G_1$] is in the centralizer in $\G$
of $\G_1$ [respectively $\G_2$].

It follows that $\G_1 \G_2$ is a subgroup of $\G$.
Since $\G$ is connected
and $f(\Gamma_1 \times \Gamma_2)$ is of finite index
in the Zariski-dense subgroup $\Gamma$,
the image $f(\Gamma_1 \times \Gamma_2) 
= f(\Gamma_1 \times \{1\}) f(\{1\}  \times \Gamma_2)$
is Zariski-dense in $\G$.
Hence $\G_1 \G_2  =  \G$.
Moreover, $\G_1, \G_2$ are normal in $\G$.

In algebraic groups, connected components are characteristic subgroups
\cite[1.4(b) Page 47]{Bor--91}.
Hence $\G_1^0, \G_2^0$ are normal subgroups in $\G$.
Using again the connectedness of $\G$,
we conclude that $\G_1^0 \G_2^0 = \G$.
\end{proof}
 
For the proofs of Theorems \ref{inGalgnotproduct} and \ref{irrZdensenot},
we use the same notation, $\G_1$, $\G_2$, $\G_1^0$, $\G_2^0$ 
as in the proof of Lemma \ref{LemmaG=G1G2}.

\begin{proof}[\textbf{Proof of Theorem \ref{inGalgnotproduct}}]
We prove the contraposition.
Suppose there exist a Zariski dense subgroup $\Gamma$ of $\G(k)$
and a presentation by a product $f : \Gamma_1 \times \Gamma_2 \longrightarrow \Gamma$.
This is the situation of the previous lemma, with moreover 
$f(\Gamma_1 \times \{1\}), f(\{1\} \times \Gamma_2)$ infinite,
therefore with $\G_1^0, \G_2^0$ of positive dimensions.
Hence the multiplication $\G_1^0 \times \G_2^0 \longrightarrow \G$,
$(g_1, g_2) \longmapsto g_1g_2$
is a presentation by a product, so that $\G$
is presentable by a product in the sense of Definition \ref{almostdirectproduct}.
\end{proof}

\begin{proof}[\textbf{Proof of Theorem \ref{irrZdensenot}}]
If $\G$ is almost $k$-simple, Theorem \ref{irrZdensenot}
is covered by Theorem  \ref{inGalgnotproduct}.
We can therefore assume that there exist an integer $t \ge 2$
and almost simple factors $\mathbf{H}_1, \hdots, \mathbf{H}_t$ of $\G$
such that $\G$ is the almost direct product of these.

We again prove the contraposition.
Let $\Gamma$ be a Zariski dense subgroup of $\G$
and $f : \Gamma_1 \times \Gamma_2 \longrightarrow \Gamma$
a presentation by a  product.
Let $\G_1^0, G_2^0$ be as in the previous proof.
By \ref{background},
upon reordering the $\mathbf{H}_j$, 
we can assume that
$\G_1^0 = \mathbf{H}_1 \cdots \mathbf{H}_s$ and 
$\G_2^0 = \mathbf{H}_{s+1} \cdots \mathbf{H}_t$
for some integer $s \in \{1, \hdots, t-1\}$.
The group $f(\Gamma_1 \times \{1\}) \cap \G_1^0(k)$ is of finite index in 
$f(\Gamma_1 \times \{1\})$,
and similarly for $f(\{1\} \times \Gamma_2) \cap \G_2^0(k)$ in $f(\{1\} \times \Gamma_2)$.
Hence $(\Gamma \cap \G_1^0(k))(\Gamma \cap \G_2^0(k))$ 
is of finite index in $\Gamma$,
and therefore $\Gamma$ is reducible.
\end{proof}

Example \ref{ExJustifiantIrred} below
is covered by Theorem \ref{irrZdensenot}, 
but not by Theorem \ref{inGalgnotproduct}.
We first record some remarks concerning the notion of irreducibility
for Zariski dense subgroups, as defined in \ref{defirr}.

\begin{rem}
\label{remreducible}
(1)
Let $k$ be a field of characteristic $0$ and $\G$
a semisimple algebraic $k$-group of positive dimension.
Let $\Gamma, \Delta$ be two Zariski dense subgroups  of $\G(k)$
such that $\Delta$ is a subgroup of finite index in $\Gamma$.
Then $\Delta$ is irreducible if and only if $\Gamma$ is irreducible.

(2)
Assume that $k = \mathbf R$.
Let $G$ be a  Lie group
of which the connected component is semisimple  without compact almost simple factor.
Assume moreover that $G$ is linear, and that $G$ is the group of real points
of a connected algebraic $\R$-group $\G$.
Let $\Gamma$ be a lattice in $G = \G(\R)$.
Then $\Gamma$ is Zariski dense in $G$, 
by the Borel density theorem \cite{Bor--60}.
The definitions are such that
irreducibility of $\Gamma$ as a lattice in $G$ in the usual sense \cite{Rag--72}
coincides with irreducibility of $\Gamma$ in $G$ in the sense of Definition \ref{defirr}.

(3)
In a connected algebraic group over a local field, 
a Zariski dense subgroup need not be a lattice.
Consider for example the field $\R$, the algebraic $\R$-group $\mathbf{SL}_2$,
and a lattice $\Gamma_g$  in $\SL_2(\R)$
which is the fundamental group of an oriented closed surface of genus $g \ge 2$;
the group of commutators of $\Gamma_g$ 
is Zariski dense in $\SL_2(\R)$, but is not a lattice.
For another class of examples, 
the real affine group does not have any lattice (because it is a non-unimodular group),
but all its non-abelian subgroups are Zariski dense; 
see Example \ref{exnppfirst}\ref{1DEexnppfirst}.
\end{rem}

We are grateful to Yves de Cornulier, 
who made us aware of the following lemma. 

\begin{lem}
Let $k$ be a field of characteristic zero,
$S$ the group of $k$-points of 
a simple connected algebraic $k$-group, with centre $C(S)$,
and $\Gamma$ a Zariski-dense subgroup of $S$.

For every infinite normal subgroup $N$  of $\Gamma$,
the centralizer $C_\Gamma(N)$ of $N$ in $\Gamma$ is contained in $C(S)$. 
In particular, $C_\Gamma(N)$ is finite.
\end{lem}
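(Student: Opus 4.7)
The plan is to reduce the problem to the case where $N$ itself is Zariski dense in the ambient algebraic group, after which the conclusion follows from a standard commutator-morphism argument. Write $\mathbf{S}$ for the connected simple algebraic $k$-group with $\mathbf{S}(k)=S$, and let $\overline{N}$ denote the Zariski closure of $N$ in $\mathbf{S}$. First I would check that $\overline{N}$ is a closed normal subgroup of $\mathbf{S}$. Since $\Gamma$ normalises $N$, each $\gamma \in \Gamma$ satisfies $\gamma\overline{N}\gamma^{-1}=\overline{N}$; the set of $s\in\mathbf{S}$ with $s\overline{N}s^{-1}=\overline{N}$ is Zariski closed and contains the Zariski dense subgroup $\Gamma$, so it is all of $\mathbf{S}$.

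Next, the identity component $\overline{N}^{\,0}$ is a closed connected normal subgroup of $\mathbf{S}$. By simplicity of $\mathbf{S}$, one has $\overline{N}^{\,0}\in\{\{1\},\mathbf{S}\}$. If $\overline{N}^{\,0}=\{1\}$, then $\overline{N}$ has dimension zero, hence only finitely many $k$-points in characteristic zero (cf.\ Theorem~\ref{motivchar0}\ref{1DEmotivchar0}), contradicting the infinitude of $N$. Therefore $\overline{N}^{\,0}=\mathbf{S}$, and since $\mathbf{S}$ is connected, $\overline{N}=\mathbf{S}$; that is, $N$ is Zariski dense in $\mathbf{S}$.

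Now take any $g\in C_\Gamma(N)$ and consider the commutator morphism
\[
\varphi_g\colon\mathbf{S}\longrightarrow\mathbf{S}, \quad s\longmapsto gsg^{-1}s^{-1}.
\]
This is a morphism of varieties that is identically $1$ on the Zariski dense subset $N$, hence identically $1$ on $\mathbf{S}$. Thus $g$ centralises all of $S=\mathbf{S}(k)$, so $g\in C(S)$, proving the first assertion. For the last statement, almost simplicity of $\mathbf{S}$ makes the scheme-theoretic centre $Z(\mathbf{S})$ a proper closed normal subgroup of $\mathbf{S}$, hence finite; consequently $C(S)=Z(\mathbf{S})(k)$ is finite, and a fortiori so is $C_\Gamma(N)$.

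The only mildly delicate point is the first step, namely extending the $\Gamma$-normality of $\overline{N}$ to $\mathbf{S}$-normality; this is precisely where the Zariski density of $\Gamma$ is essential. Everything afterwards is a direct consequence of the simplicity of $\mathbf{S}$ and the fact that polynomial identities forced on a Zariski dense set extend to the whole group.
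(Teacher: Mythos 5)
Your argument is correct and follows essentially the same route as the paper: show that the identity component of the Zariski closure of $N$ is a nontrivial normal subgroup of the ambient simple group, hence all of it, and then use that centralizing $N$ forces centralizing its Zariski closure. You simply spell out the details (normality of $\overline{N}$ via density of $\Gamma$, the commutator morphism, and finiteness of the centre) that the paper leaves implicit.
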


\begin{proof}
Denote by $\overline{N}^0$ the connected component of the Zariski closure of $N$ in $S$.
Since $N$ is infinite, $\overline{N}^0 \ne \{1\}$. 
Since $N$ is normal in $\Gamma$,
the group $\overline{N}^0$ is normal in $S$, hence it coincides with $S$.

Let $\gamma \in \Gamma$. If $\gamma$ commutes with every element of $N$,
it commutes with every element of $\overline{N}$.
Hence $C_\Gamma (N) \subset C_\Gamma \big(\overline N ^0(k)\big) 
= C_\Gamma (S) \subset C(S)$.
\end{proof}

\begin{exe}
\label{ExJustifiantIrred} 
Let $\Gamma$ be an irreducible Zariski-dense subgroup
of a semisimple connected algebraic $k$-group of the form $\G_1 \times \G_2$,
with $\G_1$ and $\G_2$ simple.
The following example shows (with $k = \R$) that $\Gamma$ need not be commensurable
to a Zariski-dense subgroup of $\G_1$ or of $\G_2$.

Let $F$ be a Zariski-dense subgroup of $\SL_2(\R)$ which is non-abelian and free.
Choose an epimorphism $\pi : F \twoheadrightarrow \Z$;
observe that $\ker (\pi)$ is also Zariski-dense in $\SL_2(\R)$.
Consider the direct product $G = \SL_2 (\R) \times \SL_2 (\R)$ and the fibre product
$$
\Gamma \, = \, \{ (x,y) \in \SL_2(\R) \times \SL_2(\R) \mid \pi(x) = \pi(y) \} .
$$
Since $\Gamma$ contains $\ker (\pi) \times \ker (\pi)$, 
it is a Zariski-dense subgroup of $G$.
Since $\Gamma \cap (\SL_2(\R) \times \{1\}) = \ker (\pi) \times \{1\}$
and  $\Gamma \cap (\{1\} \times \SL_2(\R)) = \{1\} \times \ker (\pi)$,
and the index of $\ker (\pi) \times \ker (\pi)$ in $\Gamma$ is infinite,
$\Gamma$ is an irreducible Zarski dense subgroup of $G$.
Hence Theorem \ref{irrZdensenot} applies to $\Gamma$. 

However $\Gamma$ is not commensurable to any of its two projections 
on the factors $\SL_2(\R)$, by the previous lemma.
Indeed, $\Gamma$ has an infinite normal subgroup $N := \ker (\pi) \times \{1\}$,
and $C_\Gamma(N)$ contains $\{1\} \times \ker (\pi)$,
in particular $C_\Gamma (N)$ is infinite.

Note that $\Gamma$ is a subdirect product of free groups,
i.e.\ is a member of a famous class of examples
studied among others by Bieri and Stallings.
See for example
Section 4.1 in \cite{BrM--09}.
\end{exe}

\section{A few specific applications}
\label{s:applic}

In this section we apply the criteria for presentability by products developed in the previous sections to a few classes of examples.

\subsection{Baumslag-Solitar groups}
\label{subsectionBS}

Recall that, for two integers $m, \ n \in \Z \smallsetminus \{0\}$,
the corresponding Baumslag-Solitar group is defined by the presentation
\begin{equation*}
\BS (m,n) \, = \, \langle s,t \mid ts^m t^{-1} = s^n \rangle .
\end{equation*}
This presentation has deficiency $1$, and it follows that the group itself
has deficiency $1$, cf.~\cite[Lemma 1.7]{Eps--61}.

\begin{thm}
\label{BS}
For $m,n \in \Z \smallsetminus \{0\}$, 
the group $\BS(m,n)$ is presentable by products
if and only if $\vert m \vert = \vert n \vert$.
\end{thm}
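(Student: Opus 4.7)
The plan is to use Theorem~\ref{t:def} as the main classification tool: since the defining presentation $\langle s, t \mid ts^m t^{-1} = s^n\rangle$ shows $\DEF(\BS(m,n)) \ge 1$, presentability by products would force $\BS(m,n)$ to be either virtually $\Z$ or virtually $F_k \times \Z$ for some $k \ge 1$.

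For the implication ``$|m|=|n|$ implies presentable by products'': if $m=n$, the relation shows that $s^m$ commutes with both generators, so $\langle s^m\rangle$ is an infinite cyclic central subgroup and Example~\ref{basicexplspp}\ref{1DEbasicexplspp} applies. If $m=-n$, the subgroup $H = \langle s, t^2\rangle$ has index $2$ in $\BS(m,-m)$ (it is the kernel of the map $\BS(m,-m)\to\Z/2\Z$ sending $s\mapsto 0$, $t\mapsto 1$), and the computation $t^2 s^m t^{-2} = t(ts^m t^{-1})t^{-1} = t s^{-m} t^{-1} = s^m$ identifies $H$ with $\BS(m,m)$; so $H$ is presentable by products by the previous case, and Example~\ref{basicexplspp}\ref{3DEbasicexplspp} transfers this to $\BS(m,-m)$.

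For the converse, assume $|m|\ne|n|$. Using the isomorphism $\BS(m,n)\cong\BS(n,m)$ obtained by substituting $t\mapsto t^{-1}$ (which rewrites the relation as $t^{-1} s^n t = s^m$), we may assume $|m|\le|n|$, so that $|n|\ge 2$. If $|m|=1$, this is precisely Example~\ref{exnppfirst}\ref{1DEexnppfirst}: $\BS(1,n)$ embeds as a Zariski-dense subgroup of the real affine group, which is not presentable by products, so Theorem~\ref{inGalgnotproduct} applies. If $|m|\ge 2$, the group contains a non-abelian free subgroup (by a standard ping-pong argument on the Bass--Serre tree, whose vertex and edge stabilizers are all infinite cyclic), so it is not virtually $\Z$. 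If it were virtually $F_k\times\Z$, then since $F_k\times\Z$ is residually finite and residual finiteness passes to finite-index overgroups, $\BS(m,n)$ itself would be residually finite, contradicting Meskin's theorem that $\BS(m,n)$ with $|m|,|n|\ge 2$ is residually finite only when $|m|=|n|$.

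The principal obstacle is this last step: ruling out virtual isomorphism to $F_k\times\Z$ in the non-solvable case $|m|,|n|\ge 2$ with $|m|\ne|n|$ requires an external input beyond what is developed in the paper. Meskin's residual-finiteness criterion is the most economical choice; Gersten's theorem that such $\BS(m,n)$ have exponential Dehn function would be an equally serviceable alternative, since $F_k\times\Z$ has quadratic Dehn function and the Dehn function is a commensurability invariant.
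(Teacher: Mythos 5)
Your converse direction is sound and follows the same skeleton as the paper's second argument: positive deficiency plus Theorem~\ref{t:def} reduces the question to excluding ``infinite cyclic'' and ``virtually $F_k\times\Z$''. Where you invoke Meskin's residual-finiteness criterion (together with the fact that residual finiteness passes to finite-index overgroups), the paper instead uses Lemma~\ref{normalcyclicsubgroup} (virtually $F_k\times\Z$ with $k\ge 2$ forces a normal infinite cyclic subgroup) combined with Moldavanskii's theorem that $\BS(m,n)$ admits such a subgroup only when $\vert m\vert=\vert n\vert$; it also records an independent route via the Powers property. These are external inputs of comparable weight, so your substitution is a legitimate variant rather than a gap. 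The case $\vert m\vert=1$ via Example~\ref{exnppfirst}\ref{1DEexnppfirst} is exactly the paper's argument.

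The genuine gap is in the forward direction, in the case $n=-m$ with $\vert m\vert\ge 2$. The subgroup $H=\langle s,t^2\rangle$ is \emph{not} the kernel of $\BS(m,-m)\to\Z/2\Z$ and does not have index $2$. By Reidemeister--Schreier with transversal $\{1,t\}$, that kernel is generated by $s$, $tst^{-1}$ and $t^2$, and $tst^{-1}\notin\langle s,t^2\rangle$ once $\vert m\vert\ge 2$: in the quotient $\BS(m,-m)/\langle\langle s^m\rangle\rangle\cong\Z/m\ast\Z$ the image of $H$ is $\langle\bar s\rangle\ast\langle\bar t^{\,2}\rangle$, a proper subgroup abstractly isomorphic to $\Z/m\ast\Z$, hence of infinite index by an Euler characteristic count; so $H$ itself has infinite index in $\BS(m,-m)$, and the identity $t^2s^mt^{-2}=s^m$ tells you nothing about presentability of the ambient group. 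The repair is short: since $ts^mt^{-1}=s^{-m}$, the subgroup $\langle s^m\rangle$ is infinite, cyclic and normal, and its centralizer is exactly the index-two kernel above (which contains $s$, $tst^{-1}$ and $t^2$, all of which commute with $s^m$); thus $\langle s^m\rangle$ and its centralizer are commuting infinite subgroups generating a subgroup of index $2$, which is a presentation by a product. The paper proves the sharper Proposition~\ref{folkloreBS(m,+-m)}, exhibiting an index-$2\vert m\vert$ subgroup isomorphic to $\Z\times F_{2\vert m\vert-1}$. Your cases $m=n$ (central $s^m$) and $\vert m\vert=\vert n\vert=1$ are fine as written.
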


\begin{proof}
Since $\BS(n,m)$ and $\BS(m,n)$ are isomorphic,
we may assume without loss of generality that $\vert m \vert \le \vert n \vert$.

It is straightforward to check that $\BS (1,1)$ is isomorphic to $\Z^2$, 
and $\BS(1,-1)$ is isomorphic to the fundamental group of a Klein bottle;
in particular, these groups have an infinite centre,
and are therefore presentable by products. More cases in which $\BS(m,n)$
is presentable by products are given by the following well-known proposition, 
for which we have not been able to find a convenient reference;
compare with \cite[Lemma A.7]{Sou--01}.
(Proof continues after Lemma \ref{normalcyclicsubgroup}.)

\begin{prop}
\label{folkloreBS(m,+-m)}
For $m,n \in \Z \smallsetminus \{0\}$ with $\vert m \vert = \vert n \vert \ge 2$,
the Baumslag-Solitar group $\BS (m,n)$ has a subgroup of index $2\vert m \vert$
isomorphic to the direct product $\Z \times F_{2\vert m \vert - 1}$.
\end{prop}
\begin{proof}
Since $\BS(-m,-n)$ and $\BS(m,n)$ are isomorphic,
we assume that $m \ge 2$, without loss of generality.
We write $n = \eta m$, with $\eta \in \{1, -1\}$, and $B = \BS(m,\eta m)$.

Let $T$ be the subset $\{ t, sts^{-1}, s^2 t s^{-2}, \hdots, s^{m-1} t s^{-m+1} \}$ of $B$.
By the normal form theorem for HNN-extensions, see e.g.\ \cite[Section IV.2]{LyS--77}, 
the subgroup $F_{I}$ of $B$ generated by $T$ is free on $T$,
in particular is isomorphic to $F_m$.
Denote by $F_{II}$ the subgroup of $F_I$ consisting of
products of an even number of the generators in $T$ and their inverses;
the group $F_{II}$ is free of rank $2m - 1$.

Let $\Z_S$ denote the subgroup of $B$ generated by $s^m$.
It is an infinite cyclic group, and a normal subgroup of $B$.
Moreover, elements of $\Z_S$ commute with elements of $F_{II}$.
Hence the subgroup of $B$ generated by $s^m$ and $F_{II}$
is the direct product $\Z_S \times F_{II}$.
(If $\eta = 1$ then $\Z_S$ is central in $B$
and the subgroup of $B$ generated by $s^m$ and $F_I$ 
is the direct product $\Z_S \times F_{I}$;
but this does not hold when $\eta = -1$).

Denote by $C_m = \langle c \mid c^m = 1 \rangle$ the cyclic group of order $m$,
and by $D = \langle d\mid d^2 = 1 \rangle$ the group of order $2$.
Define an epimorphism
\begin{equation*}
\pi \, : \, B \longrightarrow C_m \times D
\hskip.2cm \text{by} \hskip.2cm
\pi(s) = (c,1) \hskip.2cm \text{and} \hskip.2cm \pi(t) = (1,d).
\end{equation*}
It is straightforward that $\Z_S \times F_{II} \subset \ker (\pi)$.
To end the proof, it suffices to check that $\Z_S \times F_{II} = \ker(\pi)$.

Let $g \in B$.
Using again the normal form theorem for HNN-extensions, we can write
\begin{equation*}
g \, = \,  s^{k_0} t^{\epsilon_1} s^{k_1} t^{\epsilon_2} s^{k_2} \cdots
t^{\epsilon_\ell} s^{k_\ell}
\end{equation*}
for some $k_0 \in \Z$, $\ell \ge 0$, $\epsilon_1, \hdots, \epsilon_\ell \in \{1, -1\}$,
$k_1, \hdots, k_\ell \in \{0, 1, \hdots, m-1\}$, and thus also
\begin{equation*}
\aligned
g \, = \,  &s^{k_0 + k_1 + \cdots + k_\ell} 
\left( s^{-k_1 - \cdots - k_\ell} t^{\epsilon_1} s^{k_1 + \cdots + k_\ell} \right)
\left( s^{-k_2 - \cdots - k_\ell} t^{\epsilon_2} s^{k_2 + \cdots + k_\ell} \right)
\\
&\cdots
\left( s^{-k_{\ell-1} - k_\ell} t^{\epsilon_{\ell-1}} s^{k_{\ell-1} + k_\ell} \right)
\left( s^{-k_\ell} t^{\epsilon_\ell} s^{k_\ell} \right) .
\endaligned
\end{equation*}
Since $t s^{m} = s^{\eta m} t$, 
we have 
\begin{equation*}
g \, = \, s^{j_0} \left( s^{j_1} t s^{-j_1} \right)^{\epsilon_1}
\left( s^{j_2} t s^{-j_2} \right)^{\epsilon_2}
\cdots
\left( s^{j_\ell} t s^{-j_\ell} \right)^{\epsilon_\ell}
\end{equation*}
for appropriate $j_0 \in \Z$ and $j_1, \hdots, j_\ell \in \{0, 1, \hdots, m-1 \}$.

Suppose now that $g \in \ker (\pi)$. 
Then $j_0 \in m \Z$ and $\ell$ is even,
hence $g \in \Z_S \times F_{II}$.
\end{proof}

We also need the following.
\begin{lem}
\label{normalcyclicsubgroup}
Let $\Gamma$ be a group that has a subgroup of finite index 
isomorphic to $F_k \times \Z$ for some $k \ge 2$.
Then $\Gamma$ has a normal infinite cyclic subgroup.
\end{lem}

\begin{proof}
Let $\Delta$ be a subgroup of finite index in $\Gamma$
isomorphic to the product of a free subgroup, on free generators $a_1, \hdots, a_k$,
and of an infinite cyclic group, on a generator $b$,
say $\Delta = F_{\langle a \rangle} \times \Z_b$. Observe that $\Z_b$ is the centre of $\Delta$.
Set $N = \bigcap_{\gamma \in \Gamma} \gamma \Delta \gamma^{-1}$;
it is a normal subgroup of finite index in $\Gamma$, contained in $\Delta$.
Let $k$ be the smallest positive integer such that $b^k \in N$;
then $b^k$ it a generator of the cyclic group $C :=N \cap \Z_b$.

On the one hand, $b^k$ is in the centre of $N$, indeed in that of $\Delta$.
On the other hand, any element in the centre of $N$ commutes
with appropriate powers of each of $a_1, \hdots, a_m$,
and is therefore in $C$.
Hence $C$ is the centre of $N$, and it follows that $C$ is normal in $\Gamma$.
\end{proof}

We can now complete the proof of Theorem \ref{BS}.
When $\vert m \vert < \vert n \vert$, we have two different ways to see that 
$\BS(m,n)$ is not presentable by products. In a first argument we distinguish
the cases $1 = \vert m \vert  < \vert n \vert$ and $1 < \vert m \vert  < \vert n \vert$. 
In the first case $\BS(m,n)$ is soluble, 
and is a Zariski dense subgroup of the real affine group considered 
in Example~\ref{exnppfirst}\ref{1DEexnppfirst}. 
Thus, it is not presentable by products by Theorem~\ref{inGalgnotproduct}.
In the second case $\BS(m,n)$ is a Powers group by Theorem~3 of~\cite{HaP--11},
and is therefore not presentable by products by Proposition~5.1 of~\cite{KoL--13}.

In a second argument we use Theorem~\ref{t:def}.
If $\BS(m,n)$ is presentable by products, 
then it has a finite index subgroup isomorphic to $F_k \times \Z$ for some $k \ge 1$. 
If $k = 1$, i.e. if $\BS(m,n)$ is virtually abelian,
then $\vert m \vert = \vert n \vert = 1$.
If $k \ge 2$, then $\BS(m,n)$ has a normal infinite cyclic subgroup 
by Lemma \ref{normalcyclicsubgroup},
and it follows that $\vert m \vert = \vert n \vert$ 
by a result of Moldavanskii \cite[Number 6.A]{Mol--91}.

Thus, as soon as $\vert m \vert < \vert n \vert$, we conclude that $\BS(m,n)$ is not presentable by products.
This completes the proof of Theorem \ref{BS}.
\end{proof}

\subsection{Three-manifold groups}
\label{subsection3fold}

We now discuss finitely presented fundamental groups of three-manifolds.

\begin{thm}
\label{t:3fold}
Let $\Gamma$ be the fundamental group of some connected three-manifold.
Assume that $\Gamma$ is infinite and finitely presented.
Then $\Gamma$ is presentable by a product if and only if it has a finite
index subgroup with infinite centre, 
equivalently if it is the fundamental group of a compact Seifert fibre space,
possibly with boundary.
\end{thm}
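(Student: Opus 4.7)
The direction (B)$\Rightarrow$(A), that having a finite-index subgroup with infinite centre implies presentability by a product, is immediate from Example \ref{basicexplspp}\ref{1DEbasicexplspp} together with the finite-index invariance in Example \ref{basicexplspp}\ref{3DEbasicexplspp}. The equivalence between (B) and $\Gamma$ being the fundamental group of a compact Seifert fibre space is a classical theorem of three-manifold topology (Casson--Jungreis and Gabai for the closed case, and older work going back to Waldhausen for the bounded case); I would quote this equivalence and concentrate on proving (A)$\Rightarrow$(B).

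For (A)$\Rightarrow$(B), I would begin with a sequence of standard reductions. Using Scott's compact core theorem (after passing to the orientation double cover if necessary, by Example \ref{basicexplspp}\ref{3DEbasicexplspp}) we may assume $M$ is compact. The Kneser--Milnor prime decomposition gives $M = M_1 \# \cdots \# M_r$ and $\Gamma = \pi_1(M_1) \ast \cdots \ast \pi_1(M_r)$; by Example \ref{basicexplspp}\ref{4DEbasicexplspp}, this free product is presentable by a product only if $r = 1$ or $r = 2$ with both factors of order two. In the latter case $\Gamma \cong \Z/2 \ast \Z/2$ is virtually infinite cyclic, so (B) holds. Otherwise $M$ is prime, and either $M$ is an $S^2$-bundle over $S^1$ (in which case $\Gamma$ is virtually $\Z$) or $M$ is irreducible.

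Suppose $M$ is irreducible. Cap off any spherical boundary components (this does not affect $\pi_1$), leaving only boundary components of genus $\ge 1$. Then $M$ is aspherical, since the sphere theorem and the infiniteness of $\Gamma$ force $\pi_2(M) = 0$. If $\partial M \ne \emptyset$, Example \ref{excohom2}\ref{2DEexcohom2} gives $\cd(\Gamma) \le 2$, and Theorem \ref{t:cd2} yields that $\Gamma$ is virtually infinite cyclic or virtually $F_k \times F_\ell$ with $k, \ell \ge 1$. In the latter case I would compare Euler characteristics: $\chi(F_k \times F_\ell) = (1-k)(1-\ell) \ge 0$, while for the aspherical manifold $M$ with boundary we have $\chi(M) = \chi(\partial M)/2 \le 0$. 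Since Euler characteristic is multiplicative under finite index, equality forces $k = 1$ or $\ell = 1$, so $\Gamma$ is virtually $F_k \times \Z$ and (B) holds.

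Finally, suppose $M$ is closed and irreducible, so aspherical and $\Gamma$ is a $PD_3$-group (possibly with non-trivial orientation character). If some finite-index subgroup has infinite centre we are done. Otherwise, since $\Gamma$ is torsion-free, every finite-index subgroup has trivial centre, and Example \ref{basicexplspp}\ref{2DEbasicexplspp} then gives that $\Gamma$ is virtually a direct product $A \times B$ with $A, B$ infinite. A finite-index subgroup $\Gamma' \cong A \times B$ inherits the $PD_3$-property; since a direct product of groups is Poincar\'e duality precisely when each factor is, with dimensions summing to the total, and both $A$ and $B$ are infinite (hence of positive cohomological dimension), one of them, say $A$, is a $PD_1$-group, and therefore isomorphic to $\Z$. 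But then $\Z$ lies in the centre of $\Gamma'$, contradicting the standing assumption that every finite-index subgroup has trivial centre. The main obstacle in the proof is precisely this closed case: the cleanest route I see is to bypass geometrisation entirely by invoking $PD_n$-group decomposition theory as above, rather than working geometry-by-geometry (where one would need the hyperbolic case via Example \ref{basicexplspp}\ref{5DEbasicexplspp}, the Sol case via Example \ref{exnppfirst}\ref{2DEexnppfirst}, and a delicate Bass--Serre argument on the JSJ tree for the non-geometric case).
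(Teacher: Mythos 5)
Your argument is correct, but it follows a genuinely different route from the paper's in both of its main steps. The paper does not use the prime decomposition at all: it reduces to a compact $M$ via Jaco's theorem, quotes \cite{KoN--13} wholesale for the closed case, and in the bounded case applies Epstein's inequality $\DEF(\pi_1(M))\geq 1-\tfrac{1}{2}\chi(\partial M)>0$ so as to invoke Theorem~\ref{t:def} directly (this works even for reducible $M$, which is why the paper can skip Kneser--Milnor entirely). You instead reduce to the irreducible case in order to have asphericity and torsion-freeness, and then run Theorem~\ref{t:cd2} together with an Euler characteristic count in the bounded case --- in effect re-deriving the relevant part of Theorem~\ref{t:def} with $\chi$ playing the role of the deficiency; both versions work and cost about the same. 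The real divergence is the closed case: where the paper simply cites \cite[Theorem~8]{KoN--13}, you give a self-contained argument via Example~\ref{basicexplspp}\ref{2DEbasicexplspp} and the splitting theorem for Poincar\'e duality groups (if $A\times B$ is a $PD_3$-group with $A$, $B$ infinite, one factor is $PD_1$, hence $\Z$, hence central). This is a genuine gain in self-containedness; the one point you should make explicit is that the ``only if'' direction of the $PD_n$ product splitting requires the factors to be of type $FP$, which holds here because $A$ and $B$ are retracts of the $FP$ group $\Gamma'$ and have finite cohomological dimension (the splitting theorem is in Bieri's notes \cite{Bie--81}, already among the paper's references). Finally, note that both your proof and the paper's lean on the same deep input to pass from ``finite-index subgroup with infinite centre'' to ``Seifert fibred'', namely Waldhausen's centre theorem in the Haken case and the Seifert fibre space theorem of Casson--Jungreis and Gabai in the closed case, so neither route truly bypasses the hard three-manifold topology for that last equivalence.
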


This extends the corresponding statement for closed three-manifold groups 
proved in \cite[Theorem~8]{KoN--13}. If the boundary 
of the Seifert fibre space is non-empty, it is a union of two-tori, cf.\ \cite[p.~430]{Sco--83}.

\begin{proof}
We refer the reader to Scott's survey \cite{Sco--83} 
for facts about Seifert fibre spaces used in this proof.

Suppose $\Gamma$ is an infinite finitely presented three-manifold group 
that is presentable by a product. 
By a result of Jaco~\cite{Jac--71}, 
finite presentability ensures that $\Gamma=\pi_1(M)$ for some compact three-manifold $M$, 
possibly with boundary. 
Passing to an index two subgroup we may assume that $M$ is orientable. 
Since manifolds double covered by Seifert fibre spaces are themselves Seifert fibre spaces, 
this does not affect the conclusion.

As $M$ is orientable, so is its boundary (if it is not empty).
Next, capping off an $S^2$ in the boundary of $M$ by a three-ball 
does not change the fundamental group, 
so we may assume that $M$ does not have any spherical boundary components.

If $M$ is closed, then the result was proved in \cite[Theorem~8]{KoN--13}. 
We may therefore assume that $M$ is compact 
with non-empty and non-spherical boundary. 
Now Epstein \cite[Lemma~2.2]{Eps--61} proved that
\begin{equation*}\label{eq:Ep}
\DEF (\pi_1(M))\geq 1-\chi (M) = 1-\frac{1}{2}\chi (\partial M) \ .
\end{equation*}
Since the Euler characteristic of the boundary is non-positive, 
the deficiency of $\pi_1(M)$ is positive, so that we can apply Theorem~\ref{t:def}. 
We conclude that the deficiency is $=1$, and so the boundary of $M$ consists of tori,
and that $\Gamma$ has a finite index subgroup with infinite centre. 
This means that $M$ is Seifert fibred; see \cite{Sco--83,Wal--67}.

Conversely, every group having a finite index subgroup with infinite centre 
is presentable by products.
\end{proof}

\begin{rem}
The case of a closed $M$ treated in \cite[Theorem~8]{KoN--13} 
cannot be dealt with by appealing to Theorem~\ref{t:def}, 
since for a closed aspherical three-manifold the fundamental group 
has deficiency $0$, by another result of Epstein \cite[Lemma~3.1]{Eps--61}.
\end{rem}

\begin{rem}
In the above proof, to show that for non-Seifert manifolds the fundamental 
group is not presentable by products, we distinguished between closed manifolds 
and those with non-empty boundary. Instead, we could make a different 
distinction into cases and use different arguments as follows. First of all, 
for $\Sol$-manifolds the fundamental group is not presentable by products
by Corollary~3.7 of~\cite{KoL--13}, reproved here in Example~\ref{exnppfirst}\ref{2DEexnppfirst}.
Second of all, for a compact three-manifold with infinite fundamental group which
is neither a $\Sol$ nor a Seifert manifold the fundamental group has the Powers 
property by Theorem~3 of~\cite{HaP--11}, and is therefore not presentable by products 
by Proposition~5.1 of~\cite{KoL--13}.
\end{rem}

\subsection{Coxeter groups}
\label{subsectionCoxetergroups}

The result of this section singles out another consequence of Theorem ~\ref{inGalgnotproduct}. 
It relies on \cite{BeH--04},
and is a variation on results contained in \cite{CoH--07}, as well as in \cite{Par--07}.

Let $(W,S)$ be a Coxeter system; here, we always assume that the generating set $S$ is finite.
We follow the terminology of \cite{Bou--68}.
Denote by 
\begin{enumerate}[noitemsep]
\item[-]
$\ell$ the size of $S$;
\item[-]
$E$ the real vector space $\mathbf R ^S$;
\item[-]
$B_S$ the Tits form, which is a symmetric bilinear form on $E$ associated to $(W,S)$;
\item[-]
$\ker (B_S)$ the kernel $\left\{ v \in V \mid 
B_S(v,w) = 0 \hskip.2cm \forall \hskip.1cm w \in V \right\}$ of $B_S$;
\item[-]
$\operatorname{Of}(B_S)$ the group of invertible linear transformations
$g \in \operatorname{GL}(E)$ such that 
$B_S(gv,gw) = B_S(v,w)$ for all $v,w \in E$ and $gv = v$ for all $v \in \ker (B_S)$;
\item[-]
$\mathfrak{of}(B_S)$ its Lie algebra;
\item[-]
$\sigma_S : W \longrightarrow \operatorname{Of}(B_S)$ the geometric representation of $W$,
which is known to be faithful with discrete image (a theorem due to Tits,
see $\S$~V.4 in \cite{Bou--68}).
\end{enumerate}
Let $p_B$ [respectively $q_B$] denote the maximal dimension of a subspace of $E$
to which the restriction of $B_S$ is positive definite 
[respectively negative definite], and set $r_B = \dim ( \ker(B_S) )$;
observe that $p_B+q_B+r_B = \ell$.
We have a semi-direct product of the form
\begin{equation*}
\mathfrak{of}(B_S) \, \simeq \, (\R^{p_B + q_B})^{r_B} \rtimes \mathfrak{so}(p_B, q_B) .
\end{equation*}
In other terms, $\mathfrak{of}(B_S)$ is isomorphic 
to the Lie algebra of matrices which can be written 
in bloc form relatively to the decomposition $\ell = p_B+q_B+r_B$ as
$
\begin{pmatrix}
r & s & 0 \\ t & u & 0 \\ x & y & 0
\end{pmatrix} 
$
with
\begin{equation*}
\begin{pmatrix}
r & s  \\ t & u 
\end{pmatrix} 
\in \mathfrak{so}(p_B, q_B)
\hskip.5cm \text{i.e.} \hskip.5cm
\begin{pmatrix}   r' & t'  \\ s' & u'    \end{pmatrix}
\begin{pmatrix}   I_p & \phantom{-}0  \\ 0\phantom{_p} & -I_q    \end{pmatrix}
+
\begin{pmatrix}   I_p & \phantom{-}0  \\  0\phantom{_p} & -I_q    \end{pmatrix}
\begin{pmatrix}  r & s  \\ t & u    \end{pmatrix}
\, = \, 0 
\end{equation*}
(where $r'$ is the transpose of the matrix of $r$),
and $\begin{pmatrix}   x & y  \end{pmatrix} \in (\R^{p_B + q_B})^{r_B}$.
\par

The following characterizations (at least the first two) are standard:
\begin{enumerate}[noitemsep,label=(\roman*)]
\item\label{iDE6C}
$W$ is finite if and only if $B_S$ is positive definite,  i.e.\  if $q_B = r_B = 0$;
\item\label{iiDE6C}
$W$ is affine, 
i.e.\ is infinite and contains a subgroup of finite index isomorphic to $\mathbf Z^d$
with $d \ge 1$, if and only if $B_S$ is positive and not definite,
i.e.\ if $q_B = 0$ and $r_B \ge 1$;
\item\label{iiiDE6C}
$W$ contains non-abelian free subgroups 
if and only if $B_S$ is not positive (definite or indefinite).
\end{enumerate}
The pair $(W,S)$ is \textbf{irreducible}
if the Coxeter graph of $(W,S)$ is connected. 
When this holds, we have moreover:
\begin{enumerate}[noitemsep,label=(\roman*)]
\addtocounter{enumi}{3}
\item\label{ivDE6C}
if $q_B = 0$, i.e.\ if $W$ is affine, then $r_B = 1$,  and $d = \ell-1$;
\item\label{vDE6C}
if $\ell = 2$, then $W$ is a dihedral group, either finite ($p_B = 2$) or infinite ($p_B = 1 = r_B$);
\item\label{viDE6C}
if $\ell = 3$, and $W$ is neither finite nor affine, then $(p_B, q_B, r_B) = (2, 1, 0)$;
\item\label{viiDE6C}
if $\ell \ge 4$, then $p_B \ge 3$.
\end{enumerate}
(See \cite[Chap.\ V, $\S$~4]{Bou--68}, 
and more precisely no 2 for \ref{ivDE6C}, 
no 9 for \ref{iiiDE6C}, 
and exercice 4 for \ref{viDE6C}.
For \ref{viiDE6C}, see Proposition 7 Page 31 of~\cite{Par--89} for $\ell = 4$, 
and observe that the same inequality for $\ell \ge 4$ follows. Since~\cite{Par--89}
is not widely available, we reproduce the relevant result in 
Proposition~\ref{propthseLP} in the appendix below.)

\begin{prop}
\label{ofBnotpbp}
Let $(W,S)$ be an irreducible Coxeter system, with $S$ finite and $W$ neither finite nor affine.
The real Lie algebra $\mathfrak{of}(B_S)$ is not presentable by a product.
\end{prop}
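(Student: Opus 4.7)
The strategy is to read the structure of $\mathfrak{of}(B_S)$ off the Tits-form signature $(p_B, q_B, r_B)$ and match it against Example \ref{exirrLiealg}. I plan to split into the cases $r_B = 0$ and $r_B \ge 1$, and in each case invoke either \ref{3DEexirrLiealg} or \ref{4DEexirrLiealg} of that example.

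First I would narrow the range of admissible signatures. Because $W$ is neither finite nor affine, characterizations \ref{iDE6C} and \ref{iiDE6C} together force $q_B \ge 1$. Because $(W, S)$ is irreducible and $W$ is not dihedral (for $\ell = 2$ the group is dihedral, hence by \ref{vDE6C} either finite or affine), one has $\ell \ge 3$. For $\ell = 3$, characterization \ref{viDE6C} forces $(p_B, q_B, r_B) = (2, 1, 0)$, while for $\ell \ge 4$, characterization \ref{viiDE6C} gives $p_B \ge 3$.

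In the case $r_B = 0$ one has $\mathfrak{of}(B_S) \simeq \mathfrak{so}(p_B, q_B)$ with either $(p_B, q_B) = (2, 1)$ or $p_B \ge 3, q_B \ge 1$. In every such signature $\mathfrak{so}(p_B, q_B)$ is a simple real Lie algebra: its complexification $\mathfrak{so}(p_B + q_B, \C)$ is simple unless $p_B + q_B = 4$, and the only such signature compatible with our constraints is $(3, 1)$, for which $\mathfrak{so}(3, 1)$ is isomorphic to $\mathfrak{sl}(2, \C)$ viewed as a real Lie algebra, which is real simple. The non-simple real forms $\mathfrak{so}(4, 0)$ and $\mathfrak{so}(2, 2)$ are ruled out by $q_B \ge 1$ and $p_B \ge 3$ respectively. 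Example \ref{exirrLiealg}\ref{3DEexirrLiealg} then yields the conclusion.

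In the case $r_B \ge 1$, the constraints $q_B \ge 1$ and $p_B \ge 3$ force $\ell \ge 5$; in particular $p_B + q_B \ge 4$, so $\mathfrak{so}(p_B, q_B)$ is simple by the previous verification, and its standard representation on $V := \R^{p_B + q_B}$ is irreducible. The description $\mathfrak{of}(B_S) \simeq V^{r_B} \rtimes \mathfrak{so}(p_B, q_B)$, with $\mathfrak{so}(p_B, q_B)$ acting diagonally on $V^{r_B}$, is exactly the setup of Example \ref{exirrLiealg}\ref{4DEexirrLiealg}, which applies. The main obstacle in this plan is the bookkeeping around $\mathfrak{so}(p_B, q_B)$: one must verify its simplicity in every admissible signature, and in particular note that $\mathfrak{so}(3, 1)$ is allowed even though its complexification splits, so simplicity of the real form cannot be read off directly from the complexification.
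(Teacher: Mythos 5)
Your proof is correct and takes essentially the same route as the paper's: both reduce the statement to Example \ref{exirrLiealg} by checking that the admissible signatures force $\mathfrak{so}(p_B,q_B)$ to be simple with irreducible standard representation on $\R^{p_B+q_B}$. The paper compresses your two cases into a single appeal to Example \ref{exirrLiealg}\ref{4DEexirrLiealg}, observing only that $p_B+q_B\ge 3$ and that $(p_B,q_B)$ cannot be $(4,0)$, $(2,2)$ or $(0,4)$; your explicit handling of the $r_B=0$ case via \ref{exirrLiealg}\ref{3DEexirrLiealg} and of the real-simple form $\mathfrak{so}(3,1)$ spells out the same verification in more detail.
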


\begin{proof}
We continue with the notation above.
The hypothesis on $(W,S)$ imply that $p_B + q_B \ge 3$,
and that $(p_B, q_B)$ cannot be any of $(4,0), (2,2), (0,4)$.
As a consequence, the orthogonal Lie algebra $\mathfrak{so}(p_B, q_B)$ is simple;
moroever its action on $\R^{p_B+q_B}$ is irreducible.
Hence  $\mathfrak{of}(B_S)$ is a Lie algebra
as in Example \ref{exirrLiealg}\ref{4DEexirrLiealg},
and the proposition follows.
\end{proof}

Here is the main result of \cite{BeH--04}:

\begin{prop}
Let $(W,S)$ be a Coxeter system, with $S$ finite and $(W,S)$ irreducible.
Assume that $B_S$ is not positive.

Then the subgroup $\sigma_S (W)$ of $\operatorname{Of}(B_S)$ is Zariski dense.
\end{prop}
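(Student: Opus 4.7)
The plan is to show that the Lie algebra $\mathfrak{g}$ of the identity component $G^0$ of the Zariski closure $G := \overline{\sigma_S(W)} \subset \operatorname{Of}(B_S)$ equals $\mathfrak{of}(B_S)$; the full component group of $\operatorname{Of}(B_S)$ is then reached by $\sigma_S(W)$ because the generating reflections $\sigma_S(s)$ for $s \in S$ already surject onto $\operatorname{Of}(B_S)/\operatorname{Of}(B_S)^0$ (detected, for instance, by determinant and spinor norm). Three preliminary observations set the stage: (a) $G^0$ is characteristic in $G$, so $\mathfrak{g}$ is an $\operatorname{Ad}(\sigma_S(W))$-invariant Lie subalgebra of $\mathfrak{of}(B_S)$; (b) by characterization~\ref{iiiDE6C}, non-positivity of $B_S$ furnishes non-abelian free subgroups of $W$, hence elements of infinite order whose Zariski closures have positive dimension, so $\mathfrak{g} \neq 0$; (c) irreducibility of $(W,S)$ translates into the absence of proper non-zero $W$-invariant subspaces of $V := E/\ker(B_S)$, because any such subspace, being preserved by each reflection $\sigma_S(s)$, would induce a partition of $S$ disconnecting the Coxeter diagram.

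Combining these facts with the estimates~\ref{viDE6C} and~\ref{viiDE6C} yields $p_B + q_B \geq 3$ and $q_B \geq 1$, so $\mathfrak{so}(p_B, q_B)$ is simple and acts $\R$-irreducibly on $V$. Write $\mathfrak{of}(B_S) = V^{r_B} \rtimes \mathfrak{so}(p_B, q_B)$ and let $\pi$ denote the projection onto $\mathfrak{so}(p_B, q_B)$. The image $\pi(\mathfrak{g})$ is an $\operatorname{Ad}(\sigma_S(W))$-invariant Lie subalgebra of the simple algebra $\mathfrak{so}(p_B, q_B)$, hence is either zero or everything. I would rule out $\pi(\mathfrak{g}) = 0$ as follows: otherwise $G^0$ lies in the unipotent radical $V^{r_B}$, and the induced map $\sigma_S(W) \to \operatorname{Of}(B_S)/V^{r_B} \cong \operatorname{O}(p_B, q_B)$ has Zariski-closed image of dimension zero, hence finite image; but this image is the reflection subgroup of $\operatorname{O}(p_B, q_B)$ generated by the $\bar{\sigma}_S(s)$, which must be infinite because a finite reflection group preserves a positive-definite form, whereas $B_S\vert_V$ has signature $(p_B, q_B)$ with $q_B \geq 1$. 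Hence $\pi(\mathfrak{g}) = \mathfrak{so}(p_B, q_B)$.

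It remains to show $V^{r_B} \subset \mathfrak{g}$. Choose a linear section $\sigma \colon \mathfrak{so}(p_B, q_B) \to \mathfrak{g}$ of $\pi$; then $\mathfrak{g} \cap V^{r_B}$ is an ideal of $\mathfrak{g}$ and in particular an $\mathfrak{so}(p_B, q_B)$-submodule of $V^{r_B}$ via $\operatorname{Ad} \circ \sigma$, so by $\R$-irreducibility of $V$ it is a direct sum of entire $V$-copies. The case $r_B = 0$ is then trivial. For $r_B \geq 1$, I would exhibit $w \in W$ whose $\sigma_S(w)$ has non-trivial component in the unipotent radical $V^{r_B}$ (otherwise $\sigma_S(W)$ would sit inside the Levi subgroup $\operatorname{O}(p_B, q_B)$ of $\operatorname{Of}(B_S)$, contradicting a dimension comparison between $G$ and this Levi once $\pi(\mathfrak{g}) = \mathfrak{so}(p_B, q_B)$ is known); the commutator $[\operatorname{Ad}(\sigma_S(w)) - \operatorname{id}](\sigma(X))$ for suitable $X \in \mathfrak{so}(p_B, q_B)$ then lies in $\mathfrak{g} \cap V^{r_B}$ and is non-zero. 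The main obstacle is the case $r_B \geq 2$: one must show that the unipotent components appearing across $\sigma_S(W)$ collectively span all $r_B$ copies of $V$ inside the radical. This calls for a finer analysis of how the geometric representation interacts with $\ker(B_S)$ --- concretely, that the assignment $w \mapsto (\text{unipotent part of } \sigma_S(w))$ has full rank as a map into $\operatorname{Hom}(V, \ker(B_S))$ --- which is where the delicate combinatorial input specific to the irreducible Coxeter system enters, as in~\cite{BeH--04}.
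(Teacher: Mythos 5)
The paper gives no proof of this proposition: it is quoted verbatim as the main result of \cite{BeH--04}, so there is no in-paper argument to compare yours with. Judged on its own terms, your sketch has the right skeleton (show the Lie algebra $\mathfrak g$ of the Zariski closure is all of $\mathfrak{of}(B_S)$, then pick up the other algebraic component with a reflection, $\det = -1$), and your preliminary reductions (a)--(c) and the signature bookkeeping $p_B+q_B\ge 3$, $q_B\ge 1$ are fine. But the central step is not justified: from the fact that $\pi(\mathfrak g)$ is an $\operatorname{Ad}(\sigma_S(W))$-invariant subalgebra of the simple algebra $\mathfrak{so}(p_B,q_B)$ you cannot conclude that it is $0$ or everything. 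That dichotomy would follow if the image of $W$ in $\operatorname{O}(p_B,q_B)$ were already known to be Zariski dense --- which, in the non-degenerate case $r_B=0$, is precisely the statement being proved. Irreducibility of the $W$-action on $E/\ker(B_S)$ only excludes closures whose Lie algebra stabilizes a proper subspace; it does not exclude a proper \emph{irreducible} algebraic subgroup $H\subsetneq \operatorname{O}(p_B,q_B)$ containing the image of $W$, with $\pi(\mathfrak g)=\operatorname{Lie}(H)$ a proper invariant subalgebra. Ruling this out is the actual content of \cite{BeH--04}, where one exploits that the group is generated by reflections: the connected closure is normalized by each $\sigma_S(s)$, so $\mathfrak g$ is a $W$-submodule of $\mathfrak{o}(B_S)\simeq\Lambda^2(E/\ker B_S)$, and one must analyse which submodules can occur. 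Your argument never engages with this. You also explicitly concede the case $r_B\ge 2$ of the unipotent radical, which does occur for irreducible non-affine systems, so that part is an acknowledged gap rather than a proof.

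A smaller but instructive point: your appeal to the spinor norm for the component group is a red herring, and taken literally it points the wrong way. All the generating reflections $\sigma_S(s)$ are reflections in vectors of \emph{positive} norm, so $\sigma_S(W)$ has trivial spinor norm and meets only two of the four real components of $\operatorname{O}(p_B,q_B)$ when $p_B,q_B\ge 1$. This is not an obstruction to Zariski density only because the real identity component is not Zariski closed (the algebraic group $\mathbf{SO}_{p_B+q_B}$ is connected, so its real points form an irreducible variety); the algebraic component group is just $\Z/2$, detected by the determinant alone. You should drop the spinor norm from that sentence, and more importantly you should either supply the missing density argument in the reductive quotient or state plainly that the proposition is being imported from \cite{BeH--04}, as the paper itself does.
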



\begin{thm}
\label{Coxeter}
Let $(W,S)$ be an irreducible Coxeter system, with $S$ finite and $W$ neither finite nor affine.
Then $W$ is not presentable by a product.
\end{thm}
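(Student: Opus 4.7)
The strategy is to chain together three ingredients already assembled in this section: the faithfulness and Zariski density of the geometric representation $\sigma_S$, the fact that the ambient Lie algebra $\mathfrak{of}(B_S)$ is not presentable by a product, and Theorem~\ref{inGalgnotproduct} which promotes non-presentability from the algebraic group to its Zariski dense subgroups. In outline: realize $W$ as a Zariski dense subgroup of $\operatorname{Of}(B_S)$, show that (a finite index piece of) this algebraic group is connected and not presentable by a product, and then conclude using the invariance of presentability under passage to finite index subgroups (Example~\ref{basicexplspp}\ref{3DEbasicexplspp}).

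More concretely, the first step is to check that the hypotheses of the previous two propositions are satisfied. Since $(W,S)$ is irreducible and $W$ is neither finite nor affine, characterizations~\ref{iDE6C} and~\ref{iiDE6C} above imply that $B_S$ is not positive, which by characterization~\ref{iiiDE6C} is equivalent to saying that $W$ contains non-abelian free subgroups. The proposition just before the theorem then shows that $\sigma_S(W)$ is Zariski dense in $\operatorname{Of}(B_S)$. Also, since $W$ is neither finite nor affine and $(W,S)$ is irreducible, the analysis of $(p_B,q_B,r_B)$ in items~\ref{vDE6C}--\ref{viiDE6C} gives $p_B+q_B\geq 3$ with $(p_B,q_B)\notin\{(4,0),(2,2),(0,4)\}$, so Proposition~\ref{ofBnotpbp} applies and $\mathfrak{of}(B_S)$ is not presentable by a product.

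The only genuine subtlety is that Theorem~\ref{inGalgnotproduct} requires the ambient algebraic $k$-group to be connected, whereas $\operatorname{Of}(B_S)$ need not be. I would handle this by replacing $W$ with the finite index subgroup $W_0 := \sigma_S^{-1}(\operatorname{Of}(B_S)^0)$; by Example~\ref{basicexplspp}\ref{3DEbasicexplspp}, showing that $W_0$ is not presentable by a product will imply the same for $W$. Since $\sigma_S(W)$ is Zariski dense in $\operatorname{Of}(B_S)$ and $\sigma_S(W_0)$ has finite index in $\sigma_S(W)$, the Zariski closure of $\sigma_S(W_0)$ meets every connected component of $\operatorname{Of}(B_S)$ in a closed subset of full dimension, and since $\sigma_S(W_0)$ is contained in $\operatorname{Of}(B_S)^0$, this forces $\sigma_S(W_0)$ to be Zariski dense in the connected algebraic $\R$-group $\operatorname{Of}(B_S)^0$.

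Finally, since $\operatorname{Of}(B_S)^0$ has the same Lie algebra $\mathfrak{of}(B_S)$ as $\operatorname{Of}(B_S)$, Theorem~\ref{motivchar0}\ref{2DEmotivchar0} together with Proposition~\ref{ofBnotpbp} shows that $\operatorname{Of}(B_S)^0$ is not presentable by a product as an algebraic $\R$-group. Theorem~\ref{inGalgnotproduct} then yields that $\sigma_S(W_0)$ is not presentable by a product; since $\sigma_S$ is faithful, neither is $W_0$, and hence neither is $W$. I expect the main obstacle to be purely notational: bookkeeping the passage to $W_0$ and verifying Zariski density in the connected component carefully; conceptually the argument is a direct assembly of results already in place.
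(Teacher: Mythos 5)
Your proposal is correct and follows essentially the same route as the paper: the paper's proof likewise combines Proposition~\ref{ofBnotpbp}, the Zariski density of $\sigma_S(W)$, and Theorem~\ref{inGalgnotproduct} applied to the intersection of $\sigma_S(W)$ with the connected component of $\operatorname{Of}(B_S)$, then concludes via invariance of presentability under passage to finite index subgroups. Your write-up merely makes explicit the density of $\sigma_S(W_0)$ in $\operatorname{Of}(B_S)^0$, which the paper leaves implicit.
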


\begin{proof}
The two previous propositions and Theorem \ref{inGalgnotproduct}
imply that the intersection of $\sigma_S(W)$ with the connected component
of $\operatorname{Of}(B_S)$ is not presentable by a product.
Since this intersection if of finite index in $\sigma_S (W)$, 
the group $W$ itselfs not presentable by a product.
\end{proof}

Note that an affine  irreducible Coxeter group is presentable by a product,
because it has a subgroup of finite index isomorphic to $\Z^d$, 
for some $d \ge 1$.

\bigskip

\bigskip

\appendix

\section{Signatures of Coxeter systems with $4$ generators
\\
(from the 1989 thesis of L.~Paris)}

\vskip.2cm

We reproduce here the proof of 
Proposition 7, Page 31, of \cite{Par--89}.

Let $B = \left( B_{i,j} \right)_{1 \le i,j \le n}$ be a symmetric matrix with real coefficients, of size $n$,
where $B_{i,i} = 1$ and $-1 \le B_{i,j} \le 0$ for $i,j \in \{1, \hdots, n\}$ with $i \ne j$.
Define $p,q,r$ for $B$ as $p_B, q_B, r_B$ for $B_S$ in Subsection \ref{subsectionCoxetergroups} above.
Recall that $p+q+r=n$.

\begin{lem}
\label{PFn=3}
If $n = 3$, then $p \ge 2$. If moreover $\det (B) > 0$, then $p = 3$.
\end{lem}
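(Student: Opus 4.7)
The plan is to exploit the very restrictive form of $B$, in particular the fact that all off-diagonal entries lie in $[-1,0]$, so that the entries $B_{i,j}^2$ of the principal $2\times 2$ minors are at most $1$.

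First I would look at the three principal $2\times 2$ minors of $B$, which have the form $\det\begin{pmatrix}1 & B_{i,j} \\ B_{i,j} & 1\end{pmatrix} = 1 - B_{i,j}^2$. By the hypothesis $-1 \le B_{i,j} \le 0$, each of these is non-negative. I split into two cases. If at least one of them, say the one indexed by $\{i,j\}$, is strictly positive, then the corresponding principal $2\times 2$ submatrix has positive trace and positive determinant, hence is positive definite; restricting $B$ to the coordinate plane spanned by $e_i$ and $e_j$ then shows $p \ge 2$.

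The remaining case is when $1 - B_{i,j}^2 = 0$ for all $i \ne j$, which combined with $-1 \le B_{i,j} \le 0$ forces $B_{i,j} = -1$ for every pair $i \ne j$. Then $B = 2I - J$, where $J$ is the $3\times 3$ all-ones matrix. Since $J$ has eigenvalues $3, 0, 0$, the matrix $B$ has eigenvalues $-1, 2, 2$; in particular $p = 2$ in this degenerate case. This completes the proof that $p \ge 2$ in general.

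For the second statement, I would use that since $B$ is symmetric, $p$ equals the number of positive eigenvalues of $B$ counted with multiplicity, and $\det(B)$ is the product of the eigenvalues. We have just shown that at least two eigenvalues of $B$ are strictly positive; if moreover $\det(B) > 0$, then the product of all three eigenvalues is positive, so the remaining eigenvalue is also strictly positive, giving $p = 3$. I do not foresee a serious obstacle: the only slightly delicate point is the boundary case $a = b = c = -1$, and that is dispatched by the explicit diagonalization of $2I - J$.
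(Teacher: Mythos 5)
Your proposal is correct and follows essentially the same route as the paper: both split into the degenerate case where all off-diagonal entries equal $-1$ (eigenvalues $-1,2,2$) and the case where some entry exceeds $-1$, where the corresponding $2\times 2$ principal submatrix is positive definite (the paper completes the square where you use the trace--determinant criterion), and both deduce $p=3$ from $\det(B)>0$ via the sign of the product of eigenvalues.
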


\begin{proof}
(Compare with \cite[Ch.\ V, $\S$ 4, Exercice 4]{Bou--68}.) 
If $B_{1,2} = B_{1,3} = B_{2,3} = -1$, a computation shows that the eigenvalues of $B$
are $-1, 2, 2$, so that $p=2$, $q=1$, and $\det (B) < 0$. 
\par

Suppose now that at least one of the off-diagonal terms is not $-1$;
upon permuting $1, 2, 3$, we can assume 
that $-1 < B_{1,2} \le 0$. We have
$$
\begin{pmatrix} x & y & 0 \end{pmatrix}
B
\begin{pmatrix} x \\ y \\ 0 \end{pmatrix}
\, = \, (x + B_{1,2} y)^2 + (1 - (B_{1,2})^2)y^2 \, > \, 0
$$
for all $(x,y) \in \R^2 \smallsetminus \{(0,0) \}$; hence $p \ge 2$.
There are three possible cases:
either $p=3$ and $\det (B) > 0$,
or $p=2, r=1$ and $\det (B) = 0$,
or $p=2, q=1$ and $\det (B) < 0$.
\end{proof}

Recall that $B$ is \emph{reducible} (\`a la Perron-Frobenius) if there exists
a non-trivial partition $\{1, \hdots, n\} = I \sqcup J$ such that $B_{i,j} = 0$ for all $i \in I, j \in J$,
and \emph{irreducible} otherwise.

\begin{lem}
\label{PFn=4}
If $n=4$ and if $B$ is irreducible, then $p \ge 3$.
\end{lem}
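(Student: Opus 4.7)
My plan is to reduce the proof to a Perron--Frobenius analysis of $M := I - B$ by first applying Lemma~\ref{PFn=3} via Cauchy interlacing.

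For any principal $3 \times 3$ submatrix $B'$ of $B$, Lemma~\ref{PFn=3} gives $p(B') \geq 2$, so the second-largest eigenvalue of $B'$ is strictly positive, and Cauchy's interlacing theorem yields $\lambda_2(B) \geq \lambda_2(B') > 0$; this proves $p(B) \geq 2$ without using the irreducibility hypothesis. If moreover some principal $3 \times 3$ submatrix $B'$ happens to have $\det B' > 0$, then Lemma~\ref{PFn=3} identifies it as positive definite, whence $\lambda_3(B') > 0$, and Cauchy interlacing upgrades this to $\lambda_3(B) > 0$, closing the case with $p(B) \geq 3$.

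It remains to treat the case where every principal $3 \times 3$ submatrix of $B$ has nonpositive determinant. Here the irreducibility of $B$ becomes essential. Set $M := I - B$: a symmetric, nonnegative, irreducible matrix with zero diagonal and off-diagonal entries in $[0, 1]$. The conclusion $p(B) \geq 3$ is equivalent to $\rho_2(M) < 1$ for the second-largest eigenvalue $\rho_2$. By Perron--Frobenius, the largest eigenvalue $\rho$ of $M$ is simple with a strictly positive eigenvector $v$, and $\rho_2 < \rho$ strictly. If $\rho \leq 1$, we are done immediately. If $\rho > 1$, I would argue by contradiction: assume $\rho_2 \geq 1$ and let $w \perp v$ be a unit $\rho_2$-eigenvector. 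Since $v > 0$ componentwise, $w$ must take both positive and negative values; decompose $w = w^+ - w^-$ with disjointly supported nonnegative parts on nonempty index sets $P, N \subset \{1,2,3,4\}$, and let $Z$ be the zero-locus, so $|P| + |N| + |Z| = 4$. If the cross-term $(w^+)^T M w^-$ vanishes, then $M_{ij} = 0$ whenever $i \in P$ and $j \in N$: the subcase $Z = \emptyset$ contradicts irreducibility of $M$, while $Z \neq \emptyset$ forces $\min(|P|, |N|) = 1$, and then the trivial Perron bound (spectral radius of a $1 \times 1$ zero block is $0$) contradicts the fact that $w^+|_P$ or $w^-|_N$ is an eigenvector of the corresponding sub-block for eigenvalue $\rho_2 \geq 1$. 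In the remaining subcase $(w^+)^T M w^- > 0$, the Rayleigh quotient of $|w| := w^+ + w^-$ strictly exceeds $\rho_2$ but is still bounded by $\rho$; when $|w|$ is proportional to $v$, combining the relations $M v = \rho v$ and $M w = \rho_2 w$ by addition and subtraction shows that $(\rho + \rho_2)/2$ is an eigenvalue of the restricted block $M|_P$ with positive eigenvector $v|_P$, so the Perron bound $\rho(M|_P) \leq |P| - 1$ together with $|P| + |N| = 4$ yields $(\rho + \rho_2)/2 \leq 1$, contradicting $\rho + \rho_2 > 2$.

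The main obstacle is the final subcase --- positive cross-term with $|w|$ not proportional to the Perron vector --- which requires a finer argument exploiting the specific arithmetic of $n = 4$. The dimension is essential: the analogous statement fails for $n = 5$, since the adjacency matrix of the path $P_5$ has second eigenvalue exactly $1$, so any successful proof must leverage the low dimension (for instance through a detailed case analysis of the six isomorphism types of connected $4$-vertex graphs when all nonzero off-diagonal entries equal $-1$, combined with continuity or the Descartes sign-rule applied to the characteristic polynomial for intermediate weights).
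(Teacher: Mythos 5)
Your opening reduction is sound and matches the paper's first step: if some principal $3\times 3$ submatrix $B'$ has $\det(B')>0$, then Lemma~\ref{PFn=3} makes it positive definite and interlacing (or simply the observation $p\ge p'$) gives $p\ge 3$. But the heart of the lemma is the complementary case, and there your argument has a genuine, self-acknowledged gap: the Perron--Frobenius scheme for $M=I-B$ disposes only of the subcases where the cross-term $(w^+)^{T}Mw^-$ vanishes or where $|w|$ happens to be the Perron vector, and you explicitly leave the remaining subcase (positive cross-term, $|w|$ not proportional to $v$) unresolved, offering only a sketch of possible strategies. As written, the proof does not establish $\rho_2(M)<1$, so the lemma is not proved. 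Note also that your framework never uses the standing hypothesis of this case (that the relevant $3\times3$ minors are non-positive), which is a sign that the decisive input is missing.

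The paper closes this case by a completely elementary determinant computation rather than spectral graph theory. Irreducibility is used once, to relabel so that $B_{1,2}\neq 0$ and $B_{1,3}\neq 0$; then with $B'$ the principal submatrix on indices $2,3,4$ and $\det(B')\le 0$, an expansion of $\det(B)$ along the first row and column exhibits $\det(B)$ as $\det(B')$ plus six terms, each non-positive because every $B_{i,j}$ with $i\ne j$ lies in $[-1,0]$. If $\det(B)=0$ all seven terms vanish, which forces $B_{3,4}=B_{2,4}=-1$ and then makes the term $2B_{1,2}B_{1,3}\bigl(B_{2,3}-B_{2,4}B_{3,4}\bigr)$ strictly negative, a contradiction. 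Hence $\det(B)<0$, so $r=0$ and $q$ is odd; combined with $p\ge p'\ge 2$ this forces $q=1$ and $p=3$. If you want to complete your write-up, this sign analysis of $\det(B)$ is the ingredient you need to substitute for the unresolved Perron--Frobenius subcase.
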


\begin{proof}
Since $B$ is irreducible, upon permuting the indices $1,2,3,4$,
we may assume that $B_{1,2} \ne 0$ and $B_{1,3} \ne 0$.
\par
Set $B' =  \left( B_{i,j} \right)_{2 \le i,j \le 4}$, and let $p', q', r'$ be defined for $B'$
as $p,q,r$ for $B$. Observe that $p \ge p'$ and $q \ge q'$. 
By expanding along the first column, we have
$$
\aligned
\det (B) \, &= \, 
\begin{vmatrix} 
1 & B_{1,2} & B_{1,3} & B_{1,4} \\
B_{1,2} & 1 & B_{2,3} & B_{2,4} \\ 
B_{1,3} & B_{2,3} & 1 & B_{3,4} \\ 
B_{1,4} & B_{2,4} & B_{3,4} & 1 
\end{vmatrix}
\\
\, &= \, 
\begin{vmatrix} 1 & B_{2,3} & B_{2,4} \\ B_{2,3} & 1 & B_{3,4} \\ B_{2,4} & B_{3,4} & 1 \end{vmatrix}
\, - \, B_{1,2}
\begin{vmatrix} B_{1,2} & B_{1,3} & B_{1,4} \\ B_{2,3} & 1 & B_{3,4} \\ B_{2,4} & B_{3,4} & 1 \end{vmatrix}
\\
& \hskip.5cm \, + \, B_{1,3}
\begin{vmatrix} B_{1,2} & B_{1,3} & B_{1,4} \\ 1 & B_{2,3} & B_{2,4} \\ B_{2,4} & B_{3,4} & 1 \end{vmatrix}
\, - \, B_{1,4} 
\begin{vmatrix} B_{1,2} & B_{1,3} & B_{1,4} \\ 1 & B_{2,3} & B_{2,4} \\ B_{2,3} & 1 & B_{3,4} \end{vmatrix}
\\
\, &= \,
\det (B') 
\\
& \hskip.5cm \, - \, 
(B_{1,2})^2 \begin{vmatrix} 1 & B_{3,4} \\ B_{3,4} & 1 \end{vmatrix}
\, - \,  (B_{1,3})^2 \begin{vmatrix} 1 & B_{2,4} \\ B_{2,4} & 1 \end{vmatrix}
\\
& \hskip2cm \, - \, 
 (B_{1,4})^2 \begin{vmatrix} 1 & B_{2,3} \\ B_{2,3} & 1 \end{vmatrix}
\\
& \hskip.5cm \, + \, 
2 B_{1,2} B_{1,3} \begin{vmatrix} B_{2,3} & B_{2,4} \\ B_{3,4} & 1 \end{vmatrix}
\, + \,
2 B_{1,2} B_{1,4} \begin{vmatrix} B_{2,4} & B_{2,3} \\ B_{3,4} & 1 \end{vmatrix}
\\
& \hskip2cm \, + \, 
2 B_{1,3} B_{1,4} \begin{vmatrix} B_{3,4} & B_{2,3} \\ B_{2,4} & 1 \end{vmatrix} .
\endaligned
$$
In case $\det (B') > 0$, we have $p' = 3$ by Lemma \ref{PFn=3}, and thus $p \ge 3$.
\par
We assume from now on that $\det (B') \le 0$.
The minor expansion above shows that $\det (B)$ is the sum of seven non-positive terms.
\par
If we had $\det(B) = 0$, each of these seven terms should be zero.
In particular, we would have
$(B_{1,2})^2 \begin{vmatrix} 1 & B_{3,4} \\ B_{3,4} & 1 \end{vmatrix} = 0$,
hence $B_{3,4} = -1$, 
and $(B_{1,3})^2 \begin{vmatrix} 1 & B_{2,4} \\ B_{2,4} & 1 \end{vmatrix} = 0$,
hence $B_{2,4} = -1$.
We would then have
$B_{1,2} B_{1,3} \begin{vmatrix} B_{2,3} & B_{2,4} \\ B_{3,4} & 1 \end{vmatrix} < 0$,
and this is impossible.
\par
It follows that $\det (B) < 0$; hence $r=0$, and $q$ is odd.
Since we know that $p \ge p' \ge 2$ by Lemma \ref{PFn=3},
we have $q = 1$ and therefore $p=3$.
\end{proof}

This implies:

\begin{prop}[\cite{Par--89}]
\label{propthseLP}
Let $B = \left( B_{i,j} \right)_{1 \le i,j \le n}$ be a symmetric matrix with real coefficients, of size $n$,
with $B_{i,i} = 1$ and $-1 \le B_{i,j} \le 0$ for $i,j \in \{1, \hdots, n\}$ with $i \ne j$.
Let $p, q, r$ be as above; assume that $n = p+q+r \ge 4$.
\par

If $B$ is irreducible, then $p \ge 3$.
\end{prop}

\end{document}